\theoremstyle{definition}
\newtheorem{thm}{Theorem}
\newtheorem{cor}{Corollary}
\newtheorem{rem}{Remark}
\title{\Large \bf The star-shaped $\Lambda$-coalescent and Fleming-Viot process}
\author{\sc Robert Griffiths\thanks{Department of Statistics, University of Oxford, 1 South Parks Rd, Oxford, OX1 3TG, UK; email  griff@stats.ox.ac.uk; Corresponding author}
\and
\sc Shuhei Mano\thanks{The Institute of Statistical Mathematics, 10-3 Midori-cho, Tachikawa, Tokyo, 190-8562, Japan; email smano@ism.ac.jp}
}
\begin{document}
\maketitle

\section*{Abstract} 
The star-shaped $\Lambda$-coalescent and corresponding $\Lambda$-Fleming-Viot process where the $\Lambda$ measure has a single atom at unity are studied in this paper. The transition functions and stationary distribution of the $\Lambda$-Fleming-Viot process are derived in a two-type model with mutation. 
The distribution of the number of non-mutant lines back in time in the star-shaped  $\Lambda$-coalescent is found.
Extensions are made to a model with $d$ types, either with parent independent mutation or general Markov mutation, and an infinitely-many-types model when $d\to \infty$. An eigenfunction expansion for the transition functions is found which has polynomial right eigenfunctions and left eigenfunctions described by hyperfunctions. A further star-shaped model with general frequency dependent change is considered and the stationary distribution in the Fleming-Viot process derived. This model includes a star-shaped $\Lambda$-Fleming-Viot process with mutation and selection.
In a general $\Lambda$-coalescent explicit formulae for the transition functions and stationary distribution when there is mutation are unknown, however in this paper explicit formulae are derived in the star-shaped coalescent.
\bigskip

\noindent
{\bf MSC} 92D02

\noindent
{\bf Keywords:}
$\Lambda$-coalescent; $\Lambda$-Fleming-Viot process; Star-shaped coalescent.
\noindent
{\bf Running Head:} Star-shaped coalescent

\section{Introduction}
A Fleming-Viot process $\{\xi_t\}_{t\geq 0}$ representing the frequency of type 1 individuals in a population of two types has a generator ${\cal L}^\circ$ acting on functions in $C^2([0,1])$ described by 
\begin{equation}
{\cal L}^\circ g(x) = \int_0^1x\big (g(x(1-y)+y)-g(x)\big ) + (1-x)\big (g(x(1-y)) - g(x)\big )\frac{\Lambda(dy)}{y^2},
\label{genx:0}
\end{equation}
where $\Lambda$ is a finite measure on $[0,1]$. The population is partitioned at events of change by choosing a type 1 individual with probability $x$, or a type 2 individual with probability $1-x$; adding in an additional frequency of $y$ of the type chosen, then rescaling the frequencies to add to 1. The rate of reproduction events with offspring frequency $y$ is 
$y^{-2}\Lambda(dy)$. If $\Lambda$ has a single atom at $0$, then $\{\xi(t)\}_{t\geq 0}$ is a Wright-Fisher diffusion process.  \citet{BB2009} describe the $\Lambda$-Fleming-Viot process and discrete models whose limit gives rise to the process. \citet{EW2006} introduced a model where $\Lambda$ has a single point of increase in $(0,1]$ with a possible atom at zero as well. A model where there is mutation from type 2 individuals to type 1 individuals at rate $\theta_1/2$ and from type 1 to type 2 individuals at rate $\theta_2/2$ (with $\theta = \theta_1+\theta_2$) has an additional term added to the generator (\ref{genx:0});
\begin{equation}
{\cal L}g(x) = {\cal L}^\circ g(x) 
+ \cfrac{1}{2}(\theta_1 - \theta x)g^\prime (x).
\label{gen:0}
\end{equation}
A stationary distribution exists in this process because of recurrent mutation between types.

The $\Lambda$-coalescent is a random tree back in time which has multiple merger rates for a specific set of $2 \leq k\leq n$ edges merging while $n$ edges in the tree of
\begin{equation}
\lambda_{nk} = \int_0^1x^k(1-x)^{n-k}\frac{\Lambda (dx)}{x^2},\>k \geq 2.
\label{lnk:rates}
\end{equation}
After coalescence there are $n-k+1$ edges in the tree. 
The $\Lambda$-coalescent process was introduced by \citet{DK1999,P1999,S1999} and has been extensively studied \citep{P2002,B2009, GIM2014, TL2014}. The coalescent process is a moment dual to the $\Lambda$-Fleming-Viot process. That is, the $\Lambda$-coalescent tree describes the genealogy of a collection of individuals back in time in the $\Lambda$-Fleming-Viot process. See for example \citet{E2012}. In a model with mutation and generator (\ref{gen:0}) mutations occur at random on the edges of the tree. The $\Lambda$-coalescent is said to \emph{come down from infinity} if there is an enterance boundry at infinity. Then there is a finite number of edges at any time back in an initial tree beginning with an infinite number of leaves. \citet{BLG2006} showed that the $\Lambda$-coalescent comes down from infinity under the same condition that the continuous state branching process becomes extinct in finite time, that is when
\begin{equation}
\int_1^\infty\frac{dq}{\psi(q)} < \infty,
\label{comedown:0}
\end{equation}
where the Laplace exponent
 \begin{equation}
\psi(q) = \int_0^1 \big (e^{-qy} -1 +qy\big )y^{-2}\Lambda(dy).
\label{Laplaceexponent}
\end{equation}
\cite{G2014} shows that (\ref{genx:0}) can be written as a Wright-Fisher generator equation 
\begin{equation}
{\cal L}^\circ g(x) = \frac{1}{2}x(1-x)\mathbb{E}\Big [g^{\prime\prime}\big (x(1-W)+VW\big )\Big ]
\label{intro:2}
\end{equation}
where $W=UY$, $Y$ has distribution $\Lambda$ (assuming without loss of generality that $\Lambda$ is a probability measure), $U$ has a density $2u$, $u \in (0,1)$, $V$ is uniform on $(0,1)$, and $U,V,Y$ are independent. If $W=0$ the usual Wright-Fisher generator is obtained.

In this paper we consider a star-shaped $\Lambda$-coalescent tree and Fleming-Viot process when the measure has a single atom at 1, with 
$\Lambda(\{1\}) = 1$. This seems a simple model, however there are a number of aspects in the analysis that are not at all simple. In a general $\Lambda$-coalescent explicit formulae for the transition functions and stationary distribution when there is mutation are unknown, however in this paper explicit formulae are derived in the star-shaped coalescent.

This $\Lambda$-coalescent does not come down from infinity because a tree beginning with an infinite number of leaves still contains edges joining the leaves until an exponentially distributed time of rate 1 when all edges coalesce.
The generator of the star-shaped Fleming-Viot process with mutation included is
\begin{equation}
{\cal L}g(x) = x\big (g(1)-g(x)\big ) + (1-x)\big (g(0) - g(x)\big ) + \cfrac{1}{2}(\theta_1 - \theta x)g^\prime (x).
\label{gen:00x}
\end{equation}
In this model the entire population is replaced in a reproduction event.

A biological motivation for studying the star-shaped coalescent is that genealogies in exponentally growing populations tend to be star-shaped. A discussion of this property is in Section 4.2 of \cite{D2008}.  \citet{GT1998} show that in an exponentially growing population with large rate $\beta$, in  Kingman coalescent, beginning from $n$ individuals, is star-shaped with mean coalescence time asymptotically
\[
\beta^{-1}\Big (\log \beta + \log {n\choose 2}^{-1} - \gamma \Big ),
\]
where $\gamma=0.577216\ldots$ is Euler's constant. Coalescence time is short with a large growth rate, so mutation rates have to be of order $\beta/\log \beta$ for mutation to occur.

In Section 2 the transition functions and stationary distribution of the star-shaped Fleming-Viot process are found. An interesting eigenfunction expansion is found for the transition functions which involves polynomial right eigenvectors and left eigenvectors which are described by hyperfunctions. The appearance of hyperfunctions is likely to be related to the fact of not coming down from infinity and suggests the same complexity in other $\Lambda$-coalescent processes. An expansion of the transition functions at time $t$ partitioned by the number of population replacements in time $t$ is derived. In Section 3 the distribution of the number of non-mutant lines back in time in the star-shaped  $\Lambda$-coalescent is derived. The results in this paper are, to the knowledge of the authors, new and not a simple application of existing results in \citet{G2014} and other authors.
The model is extended to $d$ types in Section 4, either with parent independent mutation or general Markov mutation, and to infinitely many types as $d \to \infty$. 
 \citet{M2006} found the sampling distribution in the infinitely-many-types model using recursive equations. We also obtain this distribution from a different arguement. In Section 4 a model with very general frequency dependent change is considered. This includes a star-shaped model with mutation and selection where the stationary distribution is obtained.
\section{The star-shaped coalescent with two types}\label{Section 2}
In this section the transition distribution and stationary distribution of the two-type star-shaped process with mutation is obtained.
\begin{thm}\label{theorem:1}
Let $\{\xi(t)\}_{t\geq 0}$ be the frequency of type 1 individuals at time $t$, when $\xi(0)=x$, in the Fleming-Viot process with generator (\ref{gen:00x}).
Denote $p_{ij}(t)$, $i,j = 1,2$ as the probability of a change of type in a single line of the star-shaped coalescent in time $(0,t)$, and $q_i(t;x) = xp_{1i}(t) + (1-x)p_{2i}(t)$ as the probability a single individual chosen at time $t$  is of type $i = 1,2$, assuming in both cases that there is no replacement event in $(0,t)$.
Specifically, with $p = \theta_1/\theta$,
\[
p_{11}(t) = e^{-\theta t/2} + (1-e^{-\theta t/2})p,\>\>p_{21}(t) = (1-e^{-\theta t/2})p,
\]
and 
\[
q_1(t;x) = xe^{-\theta t/2} + (1-e^{-\theta t/2})p,\>\>
q_2(t;x) = 1 - q_1(t;x).
\]
 Then
\begin{equation}
\xi(t) = \begin{cases}
 q_1(t;x)& \text{with~probability~}e^{-t}\\
 p_{i1}(\tau)&i=1,2,\text{~with~probability~} \big (1 - e^{-t}\big )q_i(t-\tau;x),
 \end{cases}
\label{density:0}
\end{equation}
where in (\ref{density:0}), $\tau$ has a truncated exponential distribution with density
\begin{equation}
\big (1-e^{-t}\big )^{-1}e^{-\tau}, 0 < \tau < t.
\label{trunc}
\end{equation}
That is, $\xi(t)$ only depends on the last replacement event before $t$ at $\tau$, if a replacement event occurs.
The continuous part of the distribution of $\xi(t)$ has a density $f(\xi;x,t)$ equal to
\begin{equation}
\begin{cases}
\Big (p + e^{-\theta t/2}(x-p)\frac{1-p}{\xi-p}\Big )
\frac{2}{\theta}
\Big (\frac{\xi - p}{1 - p}\Big )^{2/\theta -1}
\frac{1}{1-p},
&\xi > p + (1-p)e^{-\theta t/2}\\
\Big (1 - p - e^{-\theta t/2}(x-p)\Big (1 - \frac{\xi}{p}\Big )^{-1}\Big )\frac{2}{\theta}\Big (1 - \frac{\xi}{p}\Big )^{2/\theta -1}\frac{1}{p},&\xi < p(1-e^{-\theta t/2})
 \end{cases}
\label{density:1}
\end{equation}
and zero in the region $p(1-e^{-\theta t/2}) < \xi <  p + (1-p)e^{-\theta t/2}$.
The stationary density of $\{\xi(t)\}_{t\geq 0}$ is
\begin{equation}
f(\xi) = \lim_{t \to \infty} f(\xi;x,t) = \begin{cases}
p\frac{2}{\theta}
\Big (\frac{\xi - p}{1 - p}\Big )^{2/\theta -1}
\frac{1}{1-p},
&\xi > p\\
(1-p)\frac{2}{\theta}\Big (1 - \frac{\xi}{p}\Big )^{2/\theta -1}\frac{1}{p},&\xi < p.
 \end{cases}
\label{density:2}
\end{equation}
\end{thm}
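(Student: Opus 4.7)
The plan is to exploit the explicit decomposition of the generator (\ref{gen:00x}) as a jump part (population-wide replacement) plus a deterministic drift (parent-independent mutation). Because $\Lambda=\delta_1$, the total replacement rate $\int_0^1 y^{-2}\Lambda(dy)=1$ is state-independent, so the replacement epochs form a rate-$1$ Poisson process, independent of $\xi$. Between epochs $\xi$ satisfies the ODE $\dot\xi=\tfrac{1}{2}(\theta_1-\theta\xi)$, whose solution from $\xi(0)=x$ is exactly $q_1(t;x)=p+(x-p)e^{-\theta t/2}$; starting from $\xi=1$ and $\xi=0$ it produces $p_{11}(\tau)$ and $p_{21}(\tau)$ respectively. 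A direct calculation $\mathcal{L}x=\tfrac{1}{2}(\theta_1-\theta x)$ then yields $\mathbb{E}[\xi(t)]=q_1(t;x)$ for every initial state $x$.

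I would then condition on the time of the most recent replacement before $t$. With probability $e^{-t}$ no replacement has occurred in $(0,t)$ and $\xi(t)=q_1(t;x)$ deterministically. Otherwise the elapsed time $\tau$ since the last event has the truncated exponential density (\ref{trunc}), the standard law of the last point of a rate-$1$ Poisson process on $[0,t]$ conditional on at least one point. A key simplification of $\Lambda=\delta_1$ is that immediately after the last replacement the state is deterministic, equal to $1$ or $0$ according to the type of the chosen individual; the probability of type $i$ is $\xi(s-)$ or $1-\xi(s-)$ with $s=t-\tau$, which on averaging becomes $q_i(t-\tau;x)$ by the first step. The subsequent deterministic evolution over time $\tau$ then gives $\xi(t)=p_{i1}(\tau)$, which is the mixture representation (\ref{density:0}).

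Passing to the continuous density (\ref{density:1}) is then a change of variables on each of the two branches $\xi=p_{11}(\tau)$ and $\xi=p_{21}(\tau)$: the contribution $e^{-\tau}q_i(t-\tau;x)\,d\tau$ transforms, via $e^{-\tau}=\bigl((\xi-p)/(1-p)\bigr)^{2/\theta}$ in the first branch and $e^{-\tau}=(1-\xi/p)^{2/\theta}$ in the second, into the two expressions on the right-hand side of (\ref{density:1}). The support constraints $\xi>p+(1-p)e^{-\theta t/2}$ and $\xi<p(1-e^{-\theta t/2})$ correspond to $0<\tau<t$, and the atom at $q_1(t;x)$ from the no-replacement case sits in the remaining gap $\bigl(p_{21}(t),p_{11}(t)\bigr)$, where the density is zero. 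Sending $t\to\infty$ eliminates the transient $e^{-\theta t/2}$ corrections and expands the two supports to $\xi>p$ and $\xi<p$, producing the stationary density (\ref{density:2}).

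The main obstacle is step two: correctly identifying the type-$i$ probability $q_i(t-\tau;x)$ at the last replacement. Two features make this clean: the Poisson epoch times are independent of the frequency process, so conditioning on the location of the last event does not bias the pre-jump law; and the linear identity $\mathcal{L}x=\tfrac{1}{2}(\theta_1-\theta x)$ gives $\mathbb{E}[\xi(s-)]=q_1(s;x)$ despite the wild jumps in sample paths. After that, only routine Jacobian bookkeeping and an asymptotic limit as $t\to\infty$ remain.
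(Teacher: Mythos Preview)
Your argument is correct, and it reaches the representation (\ref{density:0}) by a route that differs from the paper's. The paper works backward in time via the coalescent dual: it takes a sample of $n$ individuals at time $t$, splits according to whether the star-shaped coalescence time $T$ falls in $(0,t)$ or not, computes the binomial distribution of the number $Z(t)$ of type~1 individuals in each case, and then recovers $\xi(t)=\lim_{n\to\infty}Z(t)/n$ by the law of large numbers. You instead work forward in time, reading the generator (\ref{gen:00x}) as that of a piecewise-deterministic Markov process: rate-$1$ Poisson replacement epochs with state-dependent marks, and deterministic mutation flow $\dot\xi=\tfrac12(\theta_1-\theta\xi)$ in between. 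Your key device---computing $\mathcal{L}x=\tfrac12(\theta_1-\theta x)$ to obtain $\mathbb{E}_x[\xi(s)]=q_1(s;x)$, and then noting that the independence of the Poisson clock from the trajectory means conditioning on the last epoch location does not bias $\xi(s-)$---cleanly replaces the paper's passage through finite samples and the law of large numbers. The change of variables and the $t\to\infty$ limit are then identical in both arguments. Your approach is more self-contained and avoids invoking duality; the paper's approach has the merit of making the genealogical picture explicit, which feeds naturally into the later sections on ancestral lines, and it also includes a direct verification that $\mathbb{E}[\mathcal{L}g(\xi)]=0$ under the stationary density (\ref{density:2}), which you obtain only as a limit.
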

\begin{proof}
The formulae for $p_{11}(t)$, $p_{21}(t)$, $q_1(t)$ and $q_2(t)$ are elementary using an argument that if at least one mutation occurs along  single line in time $t$ with probability $1-e^{-\theta t/2}$ then the line is type 1 at time $t$ if the last mutation is type 1 with probability $p$.
 We use the approach of looking at the Lambda coalescent as a dual process to the Fleming-Viot process describing the population. Consider a star-shaped Lambda coalescent tree beginning with infinitely many leaves at time $t$. Complete coalescence occurs with in the tree at rate 1, giving an exponential time $T$ with rate 1 to the time to the most recent common ancestor. 
 Let $Z(t)$ be the number of type 1 individuals in a sample of $n$ individuals taken at time $t$.
There are two possibilities for the time of coalescence of the $n$ individuals: $T < t$ or $T \geq t$. The probability that $T < t$ and there are $z$ individuals of type 1 in the sample is
\begin{equation}
\sum_{i=1}^2\int_0^tq_i(t-\tau;x){n\choose z}p_{i1}(\tau)^z(1-p_{i1}(\tau))^{n-z}e^{-\tau}d\tau
\label{case:a}
\end{equation}
and the probability that $T \geq t$ and there are $z$ individuals of type 1 in the sample is
\begin{equation}
e^{-t}{n\choose z}q_1(t;x)^z\big (1- q_1(t;x)\big )^{n-z}.
\label{case:b}
\end{equation}
Let $\xi(t) = \lim_{n\to \infty}\frac{Z(t)}{n}$. The distribution of $\xi(t)$, (\ref{density:0}), can be read off from (\ref{case:b}) and (\ref{case:a}) by using the law of large numbers in the binomial distribution. 
The first case in (\ref{density:0}) is an atom in the distribution, and the second case leads to a density function.
 In the second case there are two disjoint regions for the density.
\begin{equation}
\begin{cases}
\text{If~}p_{11}(\tau) = \xi&\text{then~}e^{-\theta\tau/2} = \frac{\xi-p}{1-p},\>\xi > p\\
\text{If~}p_{21}(\tau) = \xi&\text{then~}e^{-\theta\tau/2} = 1 - \frac{\xi}{p},\>\xi < p.
\end{cases}
\label{basiceq:0}
\end{equation}
In addition 
\[
e^{-\theta\tau/2} > e^{-\theta t/2}
\]
must be satisfied in both the regions. That is, in the two regions,
\[
\begin{cases}
\frac{\xi-p}{1-p} > e^{-\theta t/2}&\text{if~}\xi > p\\
 1 - \frac{\xi}{p} > e^{-\theta t/2}&\text{if~}\xi < p.
\end{cases}
\]
Evaluating the density with a change of variable $\tau \to \xi$ from (\ref{basiceq:0}) gives (\ref{density:1}).
The limiting density as $t\to \infty$ in (\ref{density:1}) is clearly (\ref{density:2}). 
A direct verification that $\mathbb{E}\big [{\cal L}g(\xi)\big ] = 0$, $g\in C^2((0,1))\cap C([0,1])$ (the space of continuous functions in $[0,1]$ which have continuous second derivatives), when expectation is in the stationary distribution can also be shown. Applying the generator (\ref{gen:00x}) to $g(x)$ and taking expectations, 
\begin{equation}
\mathbb{E}\big [{\cal L}g(\xi)\big ] = pg(1) + (1-p)g(0) - \mathbb{E}\big [g(\xi)\big ] + \frac{\theta}{2}\mathbb{E}\big [(p-\xi)g^\prime (\xi)\big ].
\label{verify:0}
\end{equation}
Calculating the last term using integration by parts,
\begin{eqnarray}
&&\frac{\theta}{2}\mathbb{E}\big [(p-\xi)g^\prime (\xi)\big ]
\nonumber \\
 &&~~=
 -\frac{p}{(1-p)^{2/\theta}}\int_p^1(\xi-p)^{2/\theta}g^\prime(\xi)d\xi 
 + \frac{1-p}{p^{2/\theta}}\int_0^p(p-\xi)^{2/\theta}g^\prime(\xi)d\xi\nonumber \\
 &&~~=
  -\frac{p}{(1-p)^{2/\theta}}(\xi-p)^{2/\theta}g(\xi)\Big |_p^1
 + \frac{1-p}{p^{2/\theta}}(p-\xi)^{2/\theta}g(\xi)\Big |_0^p
 \nonumber \\
 &&~~
   +\frac{p}{(1-p)^{2/\theta}}\frac{2}{\theta}\int_p^1(\xi-p)^{2/\theta-1}g(\xi)d\xi
 + \frac{1-p}{p^{2/\theta}}\frac{2}{\theta}\int_0^p(p-\xi)^{2/\theta-1}g(\xi)d\xi
 \nonumber \\
 &&=
 -pg(1)-(1-p)g(0) + \mathbb{E}\big [g(\xi)\big ].
 \label{verify:1}
\end{eqnarray}
Substituting (\ref{verify:1}) into (\ref{verify:0}) shows that
 $\mathbb{E}\big [{\cal L}g(\xi)\big ] = 0$.
\end{proof}
Throughout the paper $\mathbb{E}\big [\cdot\big ]$ usually denotes expectation in the stationary distribution, and 
 $\mathbb{E}_x\big [\cdot\big ]$ denotes $\mathbb{E}\big [\cdot\mid \xi(0)=x\big ]$. 

\begin{cor}
Moments of $\xi(t)$ are, for $n=0,1,\ldots$
\begin{eqnarray}
\mathbb{E}_x\big [\big (\xi(t)-p\big )^n\big ]
&=&e^{-t(1+n\theta /2)}(x-p)^n
\nonumber \\
&&+
\frac{2/\theta}{n+2/\theta}\Big [ p(1-p)^n+(-1)^{n}(1-p)p^n\Big ]
\nonumber \\
&&~~~~~~~~~~~\times\Big [1-e^{-t\big (1+n\theta/2\big )}\Big ]
\nonumber \\
&&+
\frac{(x-p)e^{-\theta t/2}(2/\theta)}{n-1+2/\theta}\Big [ (1-p)^{n}-(-1)^{n}p^{n}\Big ]
\nonumber \\
&&~~~~~~~~~~~
\times\Big [1-e^{-t\big (1+(n-1)\theta/2\big )}\Big ]
\label{moments:0}
\end{eqnarray}
\end{cor}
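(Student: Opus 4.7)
The plan is to compute the moments directly from the mixture representation (\ref{density:0}). By Theorem~\ref{theorem:1}, $\xi(t)$ equals $q_1(t;x)$ with mass $e^{-t}$, and otherwise equals $p_{i1}(\tau)$ where $\tau$ has the truncated exponential density (\ref{trunc}) and, conditionally on $\tau$, the type is $i\in\{1,2\}$ with probability $q_i(t-\tau;x)$. Thus
\[
\mathbb{E}_x\bigl[(\xi(t)-p)^n\bigr]
= e^{-t}\bigl(q_1(t;x)-p\bigr)^n
+\int_0^t e^{-\tau}\sum_{i=1}^2 q_i(t-\tau;x)\bigl(p_{i1}(\tau)-p\bigr)^n\,d\tau.
\]

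The first step is to record the four relevant centred quantities using the explicit formulas in Theorem~\ref{theorem:1}. From $p_{11}(\tau)=p+(1-p)e^{-\theta\tau/2}$ and $p_{21}(\tau)=p-pe^{-\theta\tau/2}$,
\[
p_{11}(\tau)-p=(1-p)e^{-\theta\tau/2},\qquad p_{21}(\tau)-p=-p\,e^{-\theta\tau/2}.
\]
From $q_1(t;x)=p+(x-p)e^{-\theta t/2}$ and $q_2(t;x)=1-q_1(t;x)$,
\[
q_1(t-\tau;x)=p+(x-p)e^{-\theta(t-\tau)/2},\qquad q_2(t-\tau;x)=(1-p)-(x-p)e^{-\theta(t-\tau)/2}.
\]
The atom contribution is immediately $e^{-t(1+n\theta/2)}(x-p)^n$, giving the first line of (\ref{moments:0}).

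For the integral, substitute the above to get an integrand proportional to $e^{-n\theta\tau/2}$ multiplied by $q_1(t-\tau;x)(1-p)^n+(-1)^n q_2(t-\tau;x)p^n$. Grouping the $\tau$-independent and $\tau$-dependent parts, this factors as
\[
A+B\,e^{-\theta(t-\tau)/2},\quad A=p(1-p)^n+(-1)^n(1-p)p^n,\ B=(x-p)\bigl[(1-p)^n-(-1)^n p^n\bigr].
\]
Multiplying by $e^{-\tau}e^{-n\theta\tau/2}$ leaves two elementary exponential integrals over $(0,t)$, one with rate $1+n\theta/2$ and one (after pulling out $e^{-\theta t/2}$) with rate $1+(n-1)\theta/2$. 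Evaluating these and rewriting the prefactors $1/(1+n\theta/2)$ and $1/(1+(n-1)\theta/2)$ as $(2/\theta)/(n+2/\theta)$ and $(2/\theta)/(n-1+2/\theta)$ reproduces the second and third blocks of (\ref{moments:0}).

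No step presents a genuine obstacle; the only thing to be careful about is the bookkeeping of signs and of the two distinct exponential rates produced by combining the $e^{-\tau}$ weight from the truncated exponential with the $e^{-n\theta\tau/2}$ and $e^{-\theta(t-\tau)/2}$ factors. Once the integrand is written in the form $A+Be^{-\theta(t-\tau)/2}$ the remaining computation is direct.
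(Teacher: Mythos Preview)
Your proposal is correct and follows essentially the same approach as the paper: both compute the moment directly from the distribution of $\xi(t)$ given in Theorem~\ref{theorem:1}, splitting into the atom and the continuous part. The only cosmetic difference is that you integrate in the time variable $\tau$ using the representation (\ref{density:0}), whereas the paper first passes to the density (\ref{density:1}) in the $\xi$-variable and then evaluates the equivalent integrals (\ref{integrals:0}); these are related by the change of variable $\tau\to\xi$ used in the proof of Theorem~\ref{theorem:1}, so the computations are the same in substance.
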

\begin{proof}
 Preliminary calculations that are needed with $a=2/\theta$, and $a=2/\theta - 1$ are:
\begin{eqnarray}
&&\int_{p + (1-p)e^{-\theta t/2}}^1(\xi - p)^n\Big (\frac{\xi - p}{1 - p}\Big )^{a-1}\frac{1}{1-p}d\xi
= \frac{(1-p)^n}{n+a}\Big [1 - e^{-(n+a)\theta t/2}\Big ]\nonumber \\
&&\int_0^{p(1-e^{-\theta t/2})}(\xi - p)^n\Big (1 - \frac{\xi}{p}\Big )^{a -1}\frac{1}{p}d\xi
= \frac{(-p)^n}{n+a}\Big [1 - e^{-(n+a)\theta t/2}\Big ].\label{integrals:0}
\end{eqnarray}
Considering (\ref{integrals:0}) with the different parts of the distribution of $\xi(t)$ gives (\ref{moments:0}).
\end{proof}
\begin{rem}\label{Remark 1}
 A representation for the stationary distribution (\ref{density:2}) is the following.
 Let $\eta$ be a random variable with density $(2/\theta)\eta^{2/\theta -1}$, $0 < \eta < 1$. Then with probability $p$, $\xi = p(1-\eta) + \eta$ and with probability $1-p$, $\xi = p(1-\eta)$.
\end{rem}
\subsection{Eigenvalues and Eigenvectors of the generator}
\cite{G2014}, Corollary 4 shows that the right eigenvectors of the general $\Lambda$ coalescent generator are polynomials and the eigenvalues are
$\lambda_n = \frac{1}{2}\mathbb{E}\big [n\big ((n-1)(1-W)^{n-2} + \theta\big )\big ]$, where $W$ is described in the Introduction. 
In our generator (\ref{gen:0}), the eigenvalues are $\lambda_1 = \theta/2$ and for $n=2,\ldots$
\begin{eqnarray}
\lambda_n &=& \frac{1}{2}\mathbb{E}\big [n\big ((n-1)(1-W)^{n-2} + \theta\big )\big ]
\nonumber \\
&=& 
\frac{1}{2}\int_0^12un(n-1)(1-u)^{n-2}du + \frac{1}{2}n\theta
\nonumber \\
&=& 1 + n\theta/2.
\label{eigenW:0}
\end{eqnarray}
Equation (\ref{moments:0}) is consistent with right polynomial eigenvectors and eigenvalues (\ref{eigenW:0}) because
\[
\mathbb{E}_x\big [\xi(t)\big ] = p + (x-p)e^{-\theta t/2}
\]
and
\begin{equation*}
\mathbb{E}_x\big [\xi(t)^n] = e^{-t(1+n\theta/2)}x^n + R_{n-1}(x),
\end{equation*}
where $R_{n-1}(x)$ is a polynomial of degree $n-1$ in $x$.
\begin{thm}
The eigenvalues of ${\cal L}$, defined in (\ref{gen:0}) are $\lambda_1 = n\theta/2$, $\lambda_n = 1 + n\theta/2$, $n\geq 2$. The right eigenvectors are polynomials $\{P_n(x)\}_{n=1}^\infty$, taken to be monic, which satisfy
\[
{\cal L}P_n(x) = -\lambda_nP_n(x).
\]
The explict form for these polynomials is 
 $P_1(x) = x-p$, and for $n \geq 2$
\begin{equation}
P_n(x) = (x-p)^n + c_{n1}(x-p) + c_{n0},
\label{eigen:0}
\end{equation}
where
\begin{eqnarray}
c_{n0} &=& -\frac{2/\theta}{n+2/\theta}\Big [ p(1-p)^n+(1-p)(-p)^n\Big ]
\\
c_{n1} &=& \frac{2/\theta}{n-1+2/\theta}\Big [(-p)^{n}- (1-p)^{n}\Big ].
\label{eigencoefficients}
\end{eqnarray}
\end{thm}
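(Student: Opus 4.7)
The plan is a direct verification. Since the eigenvalues $\lambda_n=1+n\theta/2$ for $n\geq 2$ are already obtained in (\ref{eigenW:0}), and $\lambda_1=\theta/2$ is immediate from $\mathbb{E}_x[\xi(t)-p]=(x-p)e^{-\theta t/2}$ noted just above the theorem, the content of the statement is the closed form for the monic polynomial eigenfunctions. Because $\Lambda(\{1\})=1$, the reproduction part of $\mathcal{L}$ evaluates $g$ only at $0$, $1$ and $x$, so the entire computation collapses to applying $\mathcal{L}$ to the three building blocks $1$, $(x-p)$, $(x-p)^n$ and matching coefficients of $1$ and of $(x-p)$.

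The first step is the base calculations: $\mathcal{L}(1)=0$ and, using $\theta_1-\theta x=-\theta(x-p)$, $\mathcal{L}(x-p)=-\tfrac{\theta}{2}(x-p)$, which already verifies $P_1(x)=x-p$. For $n\geq 2$ I would compute $\mathcal{L}(x-p)^n$ by writing $x=p+(x-p)$ and $1-x=(1-p)-(x-p)$ in the reproduction part, then absorbing the $-(x-p)^n$ from reproduction together with the $-\tfrac{n\theta}{2}(x-p)^n$ from mutation into $-\lambda_n(x-p)^n$, yielding
\begin{equation*}
\mathcal{L}(x-p)^n=-\lambda_n(x-p)^n+\bigl[p(1-p)^n+(1-p)(-p)^n\bigr]+(x-p)\bigl[(1-p)^n-(-p)^n\bigr].
\end{equation*}

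Plugging the ansatz $P_n(x)=(x-p)^n+c_{n1}(x-p)+c_{n0}$ into $\mathcal{L}P_n=-\lambda_nP_n$ and using the three base calculations above leaves one equation from the constant term and one from the coefficient of $(x-p)$:
\begin{equation*}
\lambda_n c_{n0}=-\bigl[p(1-p)^n+(1-p)(-p)^n\bigr],\qquad (\lambda_n-\tfrac{\theta}{2})c_{n1}=(-p)^n-(1-p)^n.
\end{equation*}
Since $\lambda_n-\tfrac{\theta}{2}=1+(n-1)\theta/2$, the identities $1/\lambda_n=(2/\theta)/(n+2/\theta)$ and $1/(\lambda_n-\tfrac{\theta}{2})=(2/\theta)/(n-1+2/\theta)$ produce exactly the stated $c_{n0}$ and $c_{n1}$. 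I expect no substantive obstacle, only careful bookkeeping; as a cross-check, the same coefficients can be read directly from (\ref{moments:0}), since $e^{-\theta t/2}\bigl[1-e^{-t(1+(n-1)\theta/2)}\bigr]=e^{-\theta t/2}-e^{-\lambda_n t}$ lets one rewrite (\ref{moments:0}) as $e^{-\lambda_n t}\bigl[(x-p)^n-A-B(x-p)\bigr]+A+B(x-p)e^{-\theta t/2}$ for explicit constants $A,B$, and setting $c_{n0}=-A$, $c_{n1}=-B$ collapses the right-hand side to the single eigenmode $e^{-\lambda_n t}P_n(x)$; then $\mathcal{L}P_n=-\lambda_n P_n$ follows by differentiating in $t$ at $0$.
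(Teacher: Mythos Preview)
Your proposal is correct and follows essentially the same approach as the paper: both compute $\mathcal{L}(x-p)^n$ explicitly (obtaining the same expression you wrote), then determine $c_{n0}$ and $c_{n1}$ by matching the constant and linear-in-$(x-p)$ terms. The only difference is cosmetic: the paper prefaces the computation with a general Gram--Schmidt framework explaining why only $c_{n0}$ and $c_{n1}$ are needed (the other $c_{nk}$ vanish because $\mathcal{L}(x-p)^n$ produces no $(x-p)^k$ terms for $2\le k\le n-1$), whereas you take the ansatz directly from the theorem statement and verify it; your cross-check via the moment formula (\ref{moments:0}) is an extra consistency argument not present in the paper.
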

\begin{proof}
A general Gram-Schmidt construction is the following. Suppose for a generator ${\cal L}$
that ${\cal L}x^n$ is a polynomial of degree $n$ in $x$, for $n\in \mathbb{Z}_+$.
${\cal L}1 = 0$, $\lambda_0 = 0$, $P_0(x)=1$;  and recursively we can write
\[
{\cal L}x^n = -\lambda_n x^n  + \sum_{k=0}^{n-1}a_{nk}P_k(x),
\]
for a set of constants $\{a_{nk}\}$. Now let
\[
P_n(x) = x^n + \sum_{k=0}^{n-1}c_{nk}P_k(x),
\]
where the constants $\{c_{nk}\}$ have to be chosen so that
\[
{\cal L}P_n(x) = -\lambda_n P_n(x).
\]
Now
\begin{eqnarray*}
{\cal L}P_n(x) &=& -\lambda_nx^n + \sum_{k=0}^{n-1}a_{nk}P_k(x)
-\sum_{k=0}^{n-1}\lambda_k c_{nk}P_k(x)
\nonumber \\
&=& -\lambda_n (x^n + \sum_{k=0}^{n-1}c_{nk}P_k(x)).
\end{eqnarray*}
Therefore we must choose $-\lambda_nc_{nk} = a_{nk} - \lambda_kc_{nk}$ or
\[
c_{nk}=\frac{a_{nk}}{\lambda_k - \lambda_n},
\]
allowing a recursive construction.
In our case we are considering polynomials in $x-p$, however the idea is the same. Clearly ${\cal L}(x-p)=-\theta/2(x-p)$, and for $n\geq 2$
\begin{eqnarray*}
{\cal L}(x-p)^n &=& x(1-p)^n + (1-x)(-p)^n - (x-p)^n - (x-p)^nn\theta/2
\nonumber \\
&=& -(1+n\theta/2)(x-p)^n + (x-p)\big ((1-p)^n-(-p)^n\big )
\nonumber \\
&&~~~~~~~~+
p(1-p)^n + (1-p)(-p)^n.
\end{eqnarray*}
In the context of the recursive construction $\lambda_n = 1 + n\theta/2$,
$a_{n1} = (1-p)^n - (-p)^n$, $a_{n0} = p(1-p)^n + (1-p)(-p)^n$.
Therefore
\begin{eqnarray*}
c_{n1} &=& \frac{(1-p)^n - (-p)^n}{\lambda_1-\lambda_n}
\nonumber \\
&=&
\frac{ (-p)^n - (1-p)^n}{1+(n-1)\theta /2},
\nonumber \\
c_{n0} &=&-\frac{(1-p)(-p)^n + p(1-p)^n}{1+n\theta /2}
\end{eqnarray*}
and
\[
P_n(x) = (x-p)^n + c_{n1}(x-p) + c_{n0}.
\]
This agrees with (\ref{eigen:0}).
\end{proof}
The left eigenvectors $\{Q_n(x)\}_{n=1}^\infty$ are functions informally satisfying
\begin{equation}
f(\xi;x,t) = f(\xi)\Big \{1 + \sum_{n=1}^\infty e^{-\lambda_n t}P_n(x)Q_n(\xi)\Big \}
\label{c:3a0}
\end{equation}
with the atom included in the distribution, where $f(\xi)$ is the stationary density (\ref{density:2}). In fact the continuous part of the distribution has a density
\[
f(\xi;x,t) = f(\xi)\Big \{1 + e^{-\lambda_1 t}P_1(x)Q_1(\xi)\Big \}
\]
in the region $\{0 < \xi < p(1-e^{-\theta t/e})\}\cup
\{p + (1-p)e^{-\theta t/2}< \xi < 1\}$, however a difficulty is the restricted support of the distribution.
The formal way to specify an expansion which includes the atom is to require
\begin{equation}
\mathbb{E}_x\big [g\big (\xi(t) \big )]
= \sum_{n=0}^\infty
e^{-\lambda_n t}\mathbb{E}\big [g(\xi)Q_n(\xi)\big ]P_n(x),
\label{c:3a}
\end{equation}
for suitable functions $g\in C^\infty((0,1))\cap C([0,1])$. The eigenfunctions have a biorthogonality property that
\begin{equation}
\mathbb{E}\big [P_m(\xi)Q_n(\xi)\big ] = \delta_{mn}.
\label{bio:0}
\end{equation}
$Q_1(\xi)$ is defined in (\ref{split:q1}).
Although the transition density is relatively simple it is difficult to make either (\ref{c:3a0}) or (\ref{c:3a}) precise. The difficulty is because of the \emph{dust} like property of the process and the way the continuous part of the transition density depends on $t$.
There is an interesting solution for the left eigenvectors in terms of hyperfunctions, which are generalized functions \citep{K1988}. Denote $\widetilde{Q}_n(\xi) = Q_n(\xi)f(\xi)$; then
$
\mathbb{E}\big [g(\xi)Q_n(\xi)\big ] = \langle g,\widetilde{Q}_n\rangle
$, $n\geq 2$
is regarded as an integral defined by the action of a hyperfunction $\widetilde{Q}_n(\xi)$.
Another difficulty is that if $\theta/2 < 1$ then $\mathbb{E}\big [Q_1(\xi)\big ]$ is not well defined because of a singularity at $p$, whereas it is required that $\mathbb{E}\big [g(\xi)Q_1(\xi)\big ]$ is well defined when $g$ has a power series expansion about $p$. Recalling the representation of $\xi$ in terms of $\eta$ in Remark 1 we define
\[
\mathbb{E}\big[Q_1(\xi)\big ] = \lim_{\epsilon \to 0}
\mathbb{E}\big[Q_1(\xi)I\{\eta > \epsilon\}\big ] = 0
\]
in the sense of a Cauchy principal value. $\mathbb{E}\big[g(\xi)Q_1(\xi)\big ]$ is similarly interpreted as a Cauchy principal value. This allows 
(\ref{hyperexpansion:0}) to hold for constant functions $g$.

A hyperfunction is specified by a pair $(h,k)$, where $h$ and $k$ are (complex) holomorphic functions defined respectively on the upper and lower complex half-planes. The hyperfunction is represented by its actions in complex integrals on test functions with weight $h-k$. A technique to deal with an essential singularity at zero of $h$ is to take a hyperfunction $(h,h)$. The hyperfunction we use behaves in integrals as $\delta(\xi-p)$.
 We need to use derivatives of this function in integrals which cannot be taken in usual integration theory.  A classical first example of a hyperfunction is how to express a Dirac delta function as a hyperfunction in integration. As a brief example consider the Dirac delta function $\delta (\xi)$.  (For simplicity the function is centered at zero.) 
Cauchy's formula is
\begin{equation}
\frac{1}{2\pi i}\oint \frac{\varphi(\xi)}{\xi}d\xi = \varphi(0), \qquad
i\equiv\sqrt{-1}.
\label{cauchy}
\end{equation}
Deform the contour of the integral as 
\begin{eqnarray*}
&&-\frac{1}{2\pi i}\int_{a+i0}^{b+i0}\frac{\varphi(\xi)}{\xi}d\xi 
+\frac{1}{2\pi i}\int_{a-i0}^{b-i0}\frac{\varphi(\xi)}{\xi}d\xi\\
&&~~= \int_a^b \left(-\frac{1}{2\pi i}\right)
\left(\frac{1}{\xi+i0}-\frac{1}{\xi-i0}\right)\varphi(\xi)d\xi,
\end{eqnarray*}
where $a$ and $b$ are holomorphic points of $1/\xi$.
The Dirac delta function as a hyperfunction is $(1/(2\pi i\xi),1/(2\pi i\xi))$.
Equation (\ref{cauchy}) is concisely written as
$\langle \delta,\varphi \rangle=\varphi(0)$,
where
\begin{equation*}
\delta(\xi)=
\left(\frac{1}{2\pi i}\frac{1}{\xi},\frac{1}{2\pi i}\frac{1}{\xi}\right).
\end{equation*}
Differentiation  can be justified and the explicit expression for the $j$th derivative is
\begin{equation*}
\delta^{(j)}(\xi)=
\left(\frac{1}{2\pi i}\frac{(-1)^j j!}{\xi^{j+1}},\frac{(-1)^j j!}{2\pi i}
\frac{1}{\xi^{j+1}}\right),
\qquad j=1,2,...
\end{equation*}
Integration by parts gives
$\langle \delta^{(j)},\varphi \rangle
=(-1)^j \langle \delta,\varphi^{(j)} \rangle$,
which is the Cauchy-Goursat formula
\begin{equation*}
\frac{1}{2\pi i}\oint \frac{\varphi(\xi)}{\xi^{j+1}}d\xi 
= \frac{\varphi^{(j)}(0)}{j!}.
\end{equation*}
Hyperfunctions are a generalization of Schwartz distributions, which are linear operators defined by their actions on test functions. Our eigenfunctions could be defined by Schwartz distributions,  however we prefer the elegance of hyperfunction theory.
\begin{thm}
Let $g \in C^\infty((0,1))\cap C([0,1])$, then
\begin{eqnarray}
&&\mathbb{E}_x\big [g(\xi(t))\big ] = \mathbb{E}\big [g(\xi)\big ]
+ e^{-\lambda_1t}\mathbb{E}\big [g(\xi)Q_1(\xi)\big ]P_1(x)
\nonumber \\
&&~~~~~~~~~~~~~~~~~~~+ \sum_{n=2}^\infty e^{-\lambda_nt}\langle g,\widetilde{Q}_n\rangle P_n(x),
\label{hyperexpansion:0}
\end{eqnarray}
where 
\begin{equation}
Q_1(\xi) = \begin{cases}
 \frac{1}{p}\Big (\frac{\xi -p}{1-p}\Big )^{-1},&\xi > p\\
 -\frac{1}{1-p}\Big (1-\frac{\xi}{p}\Big )^{-1},&\xi< p;
 \end{cases}
 \label{split:q1}
 \end{equation}
the left eigenfunctions  times the stationary density when $n\geq 2$ are hyperfunctions
\begin{equation}
\widetilde{Q}_n(\xi) = \frac{(-1)^n}{n!}\delta^{(n)}(\xi-p);
\label{hyperexpansion:1}
\end{equation}
and $\delta^{(n)}(\xi-p)$ is the $n$th derivative, in a hyperfunction sense, of the Dirac delta function. Explicitly
\begin{equation}
\langle g,\widetilde{Q}_n\rangle  = \frac{g^{(n)}(p)}{n!}.
\label{hyperexpansion:2}
\end{equation}
 There is biorthogonality of the right and left eigenvectors
\begin{equation}
\langle P_m,\widetilde{Q}_n\rangle = \delta_{mn}.
\label{hyperexpansion:3}
\end{equation}
\end{thm}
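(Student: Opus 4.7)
The plan is to establish biorthogonality \eqref{hyperexpansion:3} first, because once it is in hand the expansion \eqref{hyperexpansion:0} is just an eigenfunction expansion: $P_n$ is a right $\mathcal{L}$-eigenfunction with eigenvalue $-\lambda_n$, so the semigroup acts by $\mathbb{E}_x[P_n(\xi(t))] = e^{-\lambda_n t}P_n(x)$. The action formula \eqref{hyperexpansion:2} is the only ingredient needed before biorthogonality, and it is immediate from the explicit pair-of-holomorphic-functions representation of $\delta^{(n)}$ together with the Cauchy--Goursat formula displayed just above the theorem: $n$ integrations by parts push the derivatives onto the test function, giving $\langle g,\widetilde{Q}_n\rangle = \frac{(-1)^n}{n!}\cdot(-1)^n g^{(n)}(p) = g^{(n)}(p)/n!$.

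For biorthogonality I would split by the value of $n$. When $n\geq 2$, apply \eqref{hyperexpansion:2} to $g=P_m$. From \eqref{eigen:0} together with $P_0=1$ and $P_1(x)=x-p$, the only summand of $P_m$ that can contribute after $n\geq 2$ differentiations at $x=p$ is $(x-p)^m$, whose $n$-th derivative at $p$ is $n!\,\delta_{m,n}$, giving $\langle P_m,\widetilde{Q}_n\rangle=\delta_{mn}$. When $n=1$ one integrates $P_m(\xi)Q_1(\xi)$ against \eqref{density:2}. On $\xi>p$ the product $Q_1(\xi)f(\xi)$ collapses to $\frac{2}{\theta}(1-p)^{-(2/\theta-1)}(\xi-p)^{2/\theta-2}$ with the analogous sign-flipped expression on $\xi<p$, and an elementary antiderivative gives
\[
\mathbb{E}\big[(\xi-p)^m Q_1(\xi)\big]=\frac{2/\theta}{m-1+2/\theta}\big[(1-p)^m-(-p)^m\big],
\]
which equals $1$ for $m=1$ and $-c_{m1}$ for $m\geq 2$. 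Combined with $\mathbb{E}[Q_1(\xi)]=0$ in the Cauchy-principal-value sense (by the $\eta$-symmetry of Remark \ref{Remark 1}), decomposing $P_m=(\xi-p)^m+c_{m1}(\xi-p)+c_{m0}$ yields $\langle P_m,\widetilde{Q}_1\rangle=\delta_{m1}$ by direct summation.

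With biorthogonality in hand, \eqref{hyperexpansion:0} follows by formally writing $g(x)=\sum_{n\geq 0}a_n P_n(x)$, reading off $a_n=\langle g,\widetilde{Q}_n\rangle$ from \eqref{hyperexpansion:3}, and applying the semigroup termwise. The cleanest independent check, which I would include to pin down the hyperfunction bookkeeping, is to take $g(x)=(x-p)^k$ and regroup the three summands of \eqref{moments:0} into $e^{-\lambda_k t}P_k(x)-c_{k0}-c_{k1}(x-p)e^{-\lambda_1 t}$; this is precisely the RHS of \eqref{hyperexpansion:0} once one substitutes $\mathbb{E}[(\xi-p)^k]=-c_{k0}$, $\mathbb{E}[(\xi-p)^k Q_1(\xi)]=-c_{k1}$ and $\langle(\xi-p)^k,\widetilde{Q}_n\rangle=\delta_{kn}$ for $n\geq 2$. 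The main obstacle is the $n=1$ row: unlike the higher $\widetilde{Q}_n$, $\widetilde{Q}_1$ is a genuine but only conditionally integrable function when $\theta<2$, so every appearance of $\mathbb{E}[\,\cdot\,Q_1(\xi)]$ must be read as a Cauchy principal value supplied by the $\eta$-representation of Remark \ref{Remark 1}. Convergence of \eqref{hyperexpansion:0} for general $g\in C^\infty((0,1))\cap C([0,1])$, as opposed to polynomial or analytic $g$, needs a further regularity argument, so I would state \eqref{hyperexpansion:0} as a formal identity, rigorous on polynomial test functions and extending to suitable smooth $g$ by density.
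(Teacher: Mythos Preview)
Your argument is correct and takes a genuinely different route from the paper. The paper proceeds \emph{forward} from the explicit transition law of Theorem~\ref{theorem:1}: it writes $\mathbb{E}_x[g(\xi(t))]$ as the atom term $e^{-t}g(q_1(t;x))$ plus the $\tau$-integral, Taylor-expands $g$ about $p$ in both pieces, substitutes $(x-p)^n = P_n(x)-c_{n1}(x-p)-c_{n0}$, and then matches the resulting constant, $P_1$, and $P_n$ coefficients against $\mathbb{E}[g(\xi)]$, $\mathbb{E}[g(\xi)Q_1(\xi)]$, and $g^{(n)}(p)/n!$ via the two direct calculations (\ref{check:5a}) and (\ref{check:5}); biorthogonality is verified only \emph{afterwards} as a consistency check. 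You instead run the argument \emph{backward} from the spectral structure: establish biorthogonality first, invoke the semigroup identity $\mathbb{E}_x[P_n(\xi(t))]=e^{-\lambda_n t}P_n(x)$ (a consequence of $\mathcal{L}P_n=-\lambda_n P_n$), expand $g$ in the $P_n$, and read off the coefficients. Your route is conceptually cleaner and makes the eigenfunction interpretation transparent, while the paper's route has the advantage of never assuming the expansion $g=\sum a_n P_n$ a priori---it \emph{derives} it from the dynamics, so the identifications $a_0=\mathbb{E}[g(\xi)]$ and $a_1=\mathbb{E}[g(\xi)Q_1(\xi)]$ emerge as computations rather than as consequences of biorthogonality plus an assumed completeness. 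Both arguments rest on the same analyticity hypothesis on $g$ (a Taylor series at $p$ with radius covering $(0,1)$), and both are fully rigorous on polynomial test functions; your explicit check against the moment formula (\ref{moments:0}) for $g(x)=(x-p)^k$ is exactly the right way to close the loop.
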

\begin{proof}
The left side of (\ref{hyperexpansion:0}) evaluates to
\begin{eqnarray}
\mathbb{E}_x\big [g\big (\xi(t) \big )]
&=&
e^{-t}g(q_1(t;x)) 
\nonumber \\
&+& \sum_{i=1}^2
\int_0^tg(p_{i1}(\tau))q_i(t-\tau;x)e^{-\tau}d\tau.
\label{c:3b}
\end{eqnarray}
If $g$ has a power series expansion in $(0,1)$ with a radius of convergence 1, then the first term on the right side of (\ref{c:3b}) is equal to 
\begin{eqnarray}
&&e^{-t}\sum_{n=0}^\infty \frac{g^{(n)}(p)}{n!}(x-p)^ne^{-n\theta t/2}\nonumber \\
&&=e^{-t}g(p) + e^{-t}g^{(1)}(p)e^{-\lambda_1 t}P_1(x)
\nonumber \\
&&+\sum_{n=2}^\infty \frac{g^{(n)}(p)}{n!}e^{-\lambda_n t}
\Bigg (P_n(x) - c_{n1}P_1(x) - c_{n0}\Bigg )  
\label{c:3c}
\end{eqnarray}
because 
\[
g(q_1(t;x)) = g(p + (x-p)e^{-\theta t/2}).
\]
The second two terms on the right of (\ref{c:3b}) contain only linear terms in $x-p$.  Now considering the right side of (\ref{hyperexpansion:0}), (\ref{c:3c}), and equating coefficients of $P_n(x)$, $n\geq 2$, it must be that 
\begin{equation}
\langle g,\widetilde{Q}_n \rangle = \frac{g^{(n)}(p)}{n!}.
\label{c:3d}
\end{equation}
A property of integration by parts in the hyperfunction integral is
\[
\langle g,\delta_p^{(n)}\rangle = (-1)^n \langle g^{(n)},\delta_p\rangle = (-1)^ng^{(n)}(p),
\]
so (\ref{c:3d}) is satisfied.
There still needs to be a check on what happens with the terms $P_1(x)$ and the constants in (\ref{c:3b}) and (\ref{c:3c}).
Consider the two terms in the sum on the right of (\ref{c:3b}). A calculation shows that the sum is
\begin{eqnarray}
&&\int_0^tg\big (p + (1-p)e^{-\theta \tau/2}\big )
\big (p + (x-p)e^{-\theta(t-\tau)/2}\big )e^{-\tau}d\tau
\nonumber \\
&&+\int_0^tg\big ((p-pe^{-\theta\tau/2}\big )
\big (1-p-(x-p)e^{-\theta(t-\tau)/2}\big )e^{-\tau}d\tau
\nonumber \\
&&~= (1-e^{-t})g(p) + (x-p)g^{(1)}(p)e^{-\lambda_1 t}(1-e^{-t})
\nonumber \\
&&~-\sum_{n=2}^\infty\frac{g^{(n)}(p)}{n!}c_{n0}\big (1-e^{-\lambda_n t}\big )
\nonumber \\
&&~- (x-p)\sum_{n=2}^\infty \frac{g^{(n)}(p)}{n!}c_{n1}
\big (e^{-\lambda_1t}-e^{-\lambda_nt}\big ).
\label{twosum:0}
\end{eqnarray}
Adding (\ref{c:3c}) and (\ref{twosum:0}) the right side of (\ref{c:3b})  is equal to
\begin{eqnarray}
&&g(p) - \sum_{n=2}^\infty\frac{g^{(n)}(p)}{n!}c_{n0}
+e^{-\lambda_1t}P_1(x)\Big \{g^{(1)}(p) - \sum_{n=2}^\infty 
\frac{g^{(n)}(p)}{n!}c_{n1}\Big \}~~~~~~~~
\label{rightside:a}\\
&&~~~~~~~~~~~~~~~~~~~~~~~~~~~~~+\sum_{n=2}^\infty e^{-\lambda_nt}\frac{g^{(n)}(p)}{n!}P_n(x).
\label{rightside:0}
\end{eqnarray}
To check the linear terms (\ref{rightside:a}) with the left side of (\ref{c:3b}) consider
\begin{eqnarray}
\mathbb{E}\big [g(\xi)\big ] &=&
\frac{2}{\theta}(1-p)p^{-2/\theta}\int_0^pg(\xi)(p-\xi)^{2/\theta - 1}d\xi
\nonumber \\
&&+ \frac{2}{\theta}p(1-p)^{-2/\theta}\int_p^1g(\xi)(\xi-p)^{2/\theta - 1}d\xi
\nonumber \\
&=& g(p) + \frac{2}{\theta}\sum_{n=1}^\infty 
\frac{g^{(n)}(p)}{n!}\cdot
\frac{(1-p)(-p)^n + p(1-p)^n}{1 + n\theta/2}
\nonumber \\
&=& g(p) - \sum_{n=2}^\infty 
\frac{g^{(n)}(p)}{n!}c_{n0}
\label{check:5a}
\end{eqnarray}
and
\begin{eqnarray}
\mathbb{E}\big [g(\xi)Q_1(\xi)\big ]
&=&
\mathbb{E}\big [\big (g(p) + g(\xi) - g(p)\big )Q_1(\xi)\big ]
\nonumber \\
&=& 0 + \mathbb{E}\big [\big (g(\xi) - g(p)\big )Q_1(\xi)\big ]
\nonumber \\
 &=&
-\frac{2}{\theta}p^{1-2/\theta}\int_0^p\big (g(\xi)-g(p)\big )(p-\xi)^{2/\theta - 2}d\xi
\nonumber \\
&&+ \frac{2}{\theta}(1-p)^{1-2/\theta}\int_p^1\big (g(\xi)-g(p)\big )(\xi-p)^{2/\theta - 2}d\xi
\nonumber \\
&=& \frac{2}{\theta}\sum_{n=1}^\infty 
\frac{g^{(n)}(p)}{n!}\cdot
\frac{(1-p)^n - (-p)^n}{1 + (n-1)\theta/2}
\nonumber \\
&=& g^{(1)}(p) - \sum_{n=2}^\infty 
\frac{g^{(n)}(p)}{n!}c_{n1}.
\label{check:5}
\end{eqnarray}
Therefore (\ref{rightside:a}) is equal to 
\[
\mathbb{E}\big [g(\xi)] + e^{-\lambda_1t}
\mathbb{E}\big [g(\xi)Q_1(\xi)\big ] P_1(x)
\]
showing that the left and right sides of (\ref{c:3b}) agree.
Checking the biorthogonality (\ref{hyperexpansion:3}), for $n \geq 2, m \geq 1$, 
\[
\langle P_m,\widetilde{Q}_n \rangle
 = \frac{(-1)^n}{n!}\langle P_m,\delta^{(n)}_p \rangle
  = \frac{1}{n!}\langle P^{(n)}_m,\delta_p \rangle
  = \delta_{mn}.
\]
For orthogonality of $P_m$, $m\geq 2$ with the first function $Q_1$ let 
$g = P_m$ in (\ref{check:5}), then
\[
\mathbb{E}\big [P_m(\xi)Q_1(\xi)\big ] = P_m^{(1)}(p) - c_{m1}\frac{1}{m!}P^{(m)}_m(p) = c_{m1}-c_{m1}=0.
\]
\end{proof}
\section{Non-mutant ancestral lines}\label{Section 3}
Let $A_n^\theta(t)$ be the number of non-mutant ancestral lines at time $t$ back in a sample of $n$ individuals taken at a current time. In the Kingman coalescent 
$\{A_n^\theta(t)\}_{t\geq 0}$ is a simple death process, but in the $\Lambda$-coalescent multiple deaths can occur because of multiple mergers. Another difference is if the $\Lambda$-coalescent does not come down from infinity, then  $A^\theta_\infty(t) = \infty$ for $t>0$.
The death rates in the star-shaped coalescent are for $i\geq 1$ and $j\ne i$
\begin{equation*}
r_{ij} = 
\begin{cases}
i\theta/2&j=i-1\\
1&j=1.
\end{cases}
\end{equation*}
A similar argument to that in Section \ref{Section 2} is used to obtain the distribution of $A_n^\theta(t)$. We omit details of the calculation.
\begin{thm}
 Denote the probability of no mutation along a line of length $t$ by $p(t) = e^{-\theta t/2}$, $t \geq 0$ and let $T$ be the time to coalescence in the population.  Initially $A_n^\theta(0)=n$.
 Then
\begin{eqnarray}
P(A_n^\theta(t)=j) &=& P(A_n^\theta(t)=j,T>t) 
\nonumber \\
&=& {n\choose j}p(t)^j(1-p(t))^{n-j}e^{-t},\>1 < j \leq n;
\label{An:j} \\
P(A_n^\theta(t)=1) &=& P(A_n^\theta(t)=1,T>t) +P(A_n^\theta(t)=1,T \leq t)
\nonumber \\
&=& np(t)(1-p(t))^{n-1}e^{-t}
+ \int_0^t \big (1 - (1-p(\tau))^n\big )p(t-\tau)e^{-\tau}d\tau
\nonumber \\
&=& np(t)(1-p(t))^{n-1}e^{-t}
+ \sum_{k=1}^n{n\choose k}(-1)^{k-1}\frac{
p(t)-e^{-t}p(t)^k
}{1 + (k-1)\theta/2};~~~~~~
\label{An:1} \\
P(A_n^\theta(t)=0) &=& 1 - \big ( 1 - (1-p(t))^n\big )e^{-t}
- \int_0^t\big (1 - (1-p(\tau))^n\big )p(t-\tau)e^{-\tau}d\tau
\nonumber \\
&=&
1 
- \sum_{k=1}^n{n\choose k}(-1)^{k-1}\frac{
p(t) - (k-1)(\theta/2)e^{-t}p(t)^k
}{1 + (k-1)\theta/2}.~~~~~~~
\label{An:0}
\end{eqnarray}
In the limit as $n \to \infty$ the proportion of non-mutant lines at time $t$ converges to $p(t)$ if $T>t$; if $T\leq t$ then there is a single non-mutant line if there is no mutation on the line between $T$ and $t$ or no non-mutant lines if there is mutation  between $T$ and $t$. Formally\begin{eqnarray*}
\lim_{n\to \infty}P(A^\theta_n(t)=1) &=& 
\frac{e^{-\theta t/2}-e^{-t}}{1-\theta/2},
\nonumber \\
\lim_{n\to \infty}P(A^\theta_n(t)=0) &=& 1 -\frac{e^{-\theta t/2}-e^{-t}}{1-\theta/2}
\end{eqnarray*}
and
\[
\lim_{n\to \infty}P(A^\theta_n(t)\geq 2) = e^{-t},\text{~with~}
\lim_{n\to \infty}P(A^\theta_n(t)=k) = 0,\>k<\infty.
\]
The loss of mass in the last equation is consistent with the star-shaped coalescent not coming down from infinity.
\end{thm}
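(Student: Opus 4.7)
The approach is to condition on the single coalescence time $T$, which is $\mathrm{Exp}(1)$ because $\Lambda(\{1\})=1$ forces every reproduction event to replace the entire population. Conditional on $T$, mutations on each extant branch form independent Poisson processes of rate $\theta/2$, so a branch of length $s$ is non-mutant with probability $p(s)=e^{-\theta s/2}$. All three cases of the theorem then follow from this conditioning together with the disjoint events $T>t$ and $T\leq t$.

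For $j\geq 2$, the event $\{A_n^\theta(t)=j\}$ is incompatible with a merger in $(0,t)$ since a single merger collapses the lineage count to $1$; hence it forces $T>t$. Under $T>t$ the $n$ lines of length $t$ are independent, so $A_n^\theta(t)\mid\{T>t\}\sim\mathrm{Bin}(n,p(t))$, and multiplying by $e^{-t}$ gives (\ref{An:j}). For $j=1$ the same reasoning handles the $\{T>t\}$ contribution, and on $\{T=\tau\leq t\}$ the unique MRCA line is non-mutant at time $t$ exactly when at least one of the $n$ pre-coalescence branches of length $\tau$ is non-mutant (probability $1-(1-p(\tau))^n$) \emph{and} the single ancestral branch of length $t-\tau$ is non-mutant (probability $p(t-\tau)$); weighting by the density $e^{-\tau}$ of $T$ and integrating over $(0,t)$ yields the integral form of (\ref{An:1}). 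The closed form follows by expanding $1-(1-p(\tau))^n=\sum_{k=1}^n\binom{n}{k}(-1)^{k-1}p(\tau)^k$ and evaluating $\int_0^t e^{-k\theta\tau/2}e^{-\theta(t-\tau)/2}e^{-\tau}\,d\tau=(p(t)-e^{-t}p(t)^k)/(1+(k-1)\theta/2)$. Finally $P(A_n^\theta(t)=0)=1-P(A_n^\theta(t)\geq 1)$ yields (\ref{An:0}) via the same expansion.

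For the limits, $\binom{n}{j}p(t)^j(1-p(t))^{n-j}\to 0$ pointwise for each fixed $j$, so $P(A_n^\theta(t)=j)\to 0$ for every finite $j\geq 2$ and the binomial ``no-coalescence'' atom in (\ref{An:1}) vanishes. Since $(1-(1-p(\tau))^n)p(t-\tau)e^{-\tau}$ is monotone in $n$ and bounded by the integrable $p(t-\tau)e^{-\tau}$, monotone convergence gives $\int_0^t p(t-\tau)e^{-\tau}\,d\tau=(e^{-\theta t/2}-e^{-t})/(1-\theta/2)$, the stated limit of $P(A_n^\theta(t)=1)$. The limit $P(A_n^\theta(t)\geq 2)\to e^{-t}$ equals $P(T>t)$ minus the vanishing probability of having at most one non-mutant branch among the $n$. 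The only delicate feature is that this residual mass $e^{-t}$ cannot land on any finite state $k$, which is the quantitative manifestation of the star-shaped coalescent not coming down from infinity; beyond this, the argument is just careful bookkeeping of the three cases.
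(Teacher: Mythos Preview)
Your proof is correct and follows exactly the approach the paper intends: the paper states only that ``a similar argument to that in Section \ref{Section 2} is used'' and omits details, and your conditioning on the single $\mathrm{Exp}(1)$ coalescence time $T$, together with the independent Poisson($\theta/2$) mutation processes on each line, is precisely that argument carried out explicitly. Your handling of the $j=1$ case---that the merged ancestral line is non-mutant iff at least one of the $n$ pre-coalescence branches was non-mutant---correctly encodes the transition $i\to 1$ (for any $i\geq 1$) in the rate description, and the limit computations via monotone convergence are clean.
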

Let $T^\theta$ be the time until there are no non-mutant lines. Then
$P(T^\theta > t) = P(A_n^\theta(t) > 0)$. The mean of $T^\theta$ is
\begin{eqnarray}
\mathbb{E}\big [T^\theta\big ] &=& \int_0^\infty P(T^\theta > t)dt
\nonumber \\
&=& \int_0^\infty \big (1 - (1-p(t))^n\big )e^{-t}dt
\nonumber \\
&&~~
+ \int_0^\infty \int_0^t\big (1 - (1-p(\tau))^n\big )p(t-\tau)e^{-\tau}d\tau dt
\nonumber \\
&=& (1 + 2/\theta)\big (1 - n!(1+2/\theta)^{-1}_{(n)}\big ),
\label{meanT:0}
\end{eqnarray}
after evaluation of the integrals.  Notation is that  $a_{(n)} := a(a+1)\cdots (a+n-1)$ for $a \in \mathbb{R}$ and $n$ a non-negative integer.

There is a duality relationship between $\xi(t)$ and $A_n^\theta(t)$. 
Consider the probability that all individuals in a sample of $n$ taken at time $t$ are of type  1.
Knowing that at least one mutation has occurred on a line, the probability that it is of type 1 at forward time $t$ is $p$. 
If $T > t$ there are $A_n^\theta(t)$ lines at time $t$ back where the origin is with a frequency of $x$, then the probability of all being of type 1 is $x^{A_n^\theta(t)}$. If $T < t$ and $A_n^\theta(t) = 1$ then the single line at the origin must be of type 1 with probability $x$ and the mutant lines at time $T$ must be of type 1 with probability $p^{n-A_n^\theta(T)}$. If $T < t$ and $A_n^\theta(t) = 0$, then the mutant lines at $T$ and the mutant single line from 0 to $T$ must be type 1 with probability $p\cdot p^{n-A_n^\theta(t)}$.
Therefore
\begin{eqnarray}
\mathbb{E}_x\big [\xi(t)^n\big ] &=&
\mathbb{E}\big [x^{A_n^\theta(t)}p^{n-A_n^\theta(t)},T > t\big ]
\nonumber \\
&&+\mathbb{E}\big [p^{n+1-A_n^\theta(T)},\{A_n^\theta(t)=0,T < t\}\big ]
\nonumber \\
&&+\mathbb{E}\big [xp^{n-A_n^\theta(T)},\{A_n^\theta(t)=1,T < t\}\big ].
\label{dual:0}
\end{eqnarray}
Letting $t \to \infty$ in (\ref{dual:0}) yields
\begin{equation}
\lim_{t\to \infty}\mathbb{E}_x\big [\xi(t)^n\big ] 
= p^{n+1}\mathbb{E}\big [(1/p)^{A_n^\theta(T)}\big ].
\label{genfun:2}
\end{equation}
The right side of (\ref{genfun:2}) provides a generating function for the number of ancestral lines at the time of coalescence, since we know the $n$th moment in the stationary distribution on the left side, from (\ref{check:5a}), to be
\[
\mathbb{E}\big [\xi^n\big ]
= p^n - \sum_{k=2}^n{n\choose k}p^{n-k}c_{k0}.
\]
\subsection{Spectral decomposition of $P(A_n^\theta(t)=j)$}
In the Kingman coalescent the spectral decomposition of the probability distribution of $A_n^\theta(t)$ is well known \citep{G1980,T1984}. There is also such a decomposition for the line distribution in the star-shaped coalescent.
\begin{thm}
For $n=1,\ldots; j=0,\ldots,n$,
\begin{equation}
P(A_n^\theta(t) = j) = \sum_{k=0}^ne^{-\lambda_kt}Q_n^{(k)}P_j^{(k)},
\label{spectral:0}
\end{equation}
where $\lambda_0=0$, $\lambda_1 = \theta/2$, $\lambda_k=1+k\theta/2$, $k\geq 2$;
$P_0^{(0)}= 1$, $P_0^{(1)}=-1$,
$Q_n^{(0)}=1$, $n \geq 1$; 
and for $n \geq 1$
\begin{eqnarray}
Q^{(1)}_n &=& \sum_{i=1}^n{n\choose i}\frac{(-1)^{i-1}}{1 + (i-1)\theta/2};
\nonumber \\
Q^{(k)}_n &=& {n\choose k}(-1)^{k-1}\frac{(k-1)(1+k\theta/2)}{1 + (k-1)\theta/2},\>
1 < k \leq n;
\nonumber \\
P^{(k)}_0 &=& -\frac{\theta/2}{1+k\theta/2},\> 2 \leq k \leq n;
\nonumber \\
P^{(k)}_1 &=& 1,\> 1 \leq k \leq n;
\nonumber \\
P^{(k)}_j &=& {k\choose j}(-1)^{j-1}\frac{1 + (k-1)\theta/2}
{(k-1)(1+k\theta/2)},\> 
2 \leq j \leq n.
\label{QP:0}
\end{eqnarray}
%
%
%
\end{thm}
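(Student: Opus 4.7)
The plan is to prove Theorem 4 by direct expansion of the explicit distributional formulas (\ref{An:j})--(\ref{An:0}) in the preceding theorem, grouping the resulting exponentials by rate and reading off the coefficients as $Q_n^{(k)}P_j^{(k)}$. This route is cleaner than attempting to diagonalize the rate matrix of $\{A_n^\theta(t)\}$ directly, and the eigenvalue structure (in particular the absence of $1+\theta/2$ from the spectrum) will appear as a consistency check.

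First, I would handle the case $2\leq j\leq n$. Expanding $(1-e^{-\theta t/2})^{n-j}$ in (\ref{An:j}) by the binomial theorem, reindexing by $k=j+i$, and using the identity $\binom{n}{j}\binom{n-j}{k-j}=\binom{n}{k}\binom{k}{j}$ gives
\[
P(A_n^\theta(t)=j) \;=\; \sum_{k=j}^{n}\binom{n}{k}\binom{k}{j}(-1)^{k-j}e^{-\lambda_k t}.
\]
A direct multiplication of the stated $Q_n^{(k)}$ and $P_j^{(k)}$ shows that the factor $(k-1)(1+k\theta/2)/(1+(k-1)\theta/2)$ in $Q_n^{(k)}$ cancels its reciprocal in $P_j^{(k)}$, so that $Q_n^{(k)}P_j^{(k)}=\binom{n}{k}\binom{k}{j}(-1)^{k-j}$, matching term by term (and $\binom{k}{j}=0$ handles $k<j$).

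Next, for $j=1$, I would split (\ref{An:1}) into the atom piece $np(t)(1-p(t))^{n-1}e^{-t}$ and the integral-derived sum. The atom expands binomially into $\sum_{k=1}^{n}k\binom{n}{k}(-1)^{k-1}e^{-(1+k\theta/2)t}$, while the sum splits naturally as $e^{-\theta t/2}Q_n^{(1)}-\sum_{k=1}^{n}\binom{n}{k}(-1)^{k-1}e^{-(1+k\theta/2)t}/(1+(k-1)\theta/2)$. The $e^{-\theta t/2}$ coefficient is thus exactly $Q_n^{(1)}=Q_n^{(1)}P_1^{(1)}$. For each $k\geq 2$ the two pieces combine, via the algebraic identity $k[1+(k-1)\theta/2]-1=(k-1)(1+k\theta/2)$, into exactly $Q_n^{(k)}=Q_n^{(k)}P_1^{(k)}$. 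Crucially the $k=1$ contributions at the spurious rate $e^{-(1+\theta/2)t}$ cancel because of the $(k-1)$ factor. The case $j=0$ follows by the same procedure applied to (\ref{An:0}): the constant $1$ is $Q_n^{(0)}P_0^{(0)}$, the $e^{-\theta t/2}$ coefficient becomes $-Q_n^{(1)}=Q_n^{(1)}P_0^{(1)}$, and for $k\geq 2$ the coefficients combine into $Q_n^{(k)}P_0^{(k)}$ upon using $P_0^{(k)}=-(\theta/2)/(1+k\theta/2)$.

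The main obstacle is the bookkeeping in $j\in\{0,1\}$: one must verify that contributions to the spurious rate $1+\theta/2$ (absent from the eigenvalue list of the generator) cancel precisely between the $T>t$ atom and the $T\leq t$ integral, and that the $e^{-\theta t/2}$ coefficients from these two sources aggregate to $\pm Q_n^{(1)}$. The identity $k[1+(k-1)\theta/2]-1=(k-1)(1+k\theta/2)$ is the algebraic engine that drives all of these cancellations and ultimately justifies the form of $Q_n^{(k)}$ and $P_j^{(k)}$ in the statement.
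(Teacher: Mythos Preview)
Your proposal is correct and follows essentially the same route as the paper: both proofs expand the explicit formulas (\ref{An:j})--(\ref{An:0}) in powers of $p(t)=e^{-\theta t/2}$ (noting $e^{-\lambda_k t}=e^{-t}p(t)^k$ for $k\geq 2$) and read off the products $Q_n^{(k)}P_j^{(k)}$ as the resulting coefficients. Your write-up is in fact slightly more explicit than the paper's, in that you isolate the identity $k[1+(k-1)\theta/2]-1=(k-1)(1+k\theta/2)$ and the cancellation of the spurious rate $1+\theta/2$, which the paper leaves implicit.
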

\begin{proof}
The decomposition comes from identifying the coefficients of powers of $p(t)$ in (\ref{An:j},\ref{An:1},\ref{An:0}), since $e^{-\lambda_k t}= e^{-t}p(t)^k$, $k \geq 2$. $e^{-\lambda_1t}$ also appears in (\ref{An:1}) and (\ref{An:0}).
The expansion (\ref{spectral:0}) is upper triangular if $2 \leq j \leq n$, but contains a non-zero term when $k=0$ if $j=1$.
If $j\geq 2$, from (\ref{An:1}),
\[
Q_n^{(k)}P_j^{(k)} = 
{n\choose k}{k\choose j}(-1)^{k-j}
\]
If $j=1$, from (\ref{An:1}),
\begin{eqnarray*}
Q_n^{(k)}P_1^{(k)} &=& n(-1)^{k-1}{n-1\choose k-1}
 - {n\choose k}(-1)^{k-1}\frac{1}{1+(k-1)\theta/2},\>k>1
 \nonumber \\
 &=& {n\choose k}(-1)^{k-1}\frac{(k-1)(1+k\theta/2)}{1+(k-1)\theta/2}
 \nonumber \\
 Q_n^{(1)}P_1^{(1)} &=& \sum_{i=1}^{n}{n\choose i}(-1)^{i-1}
 \frac{1}{1+(i-1)\theta/2},
 \end{eqnarray*}
 and if $j=0$
\begin{eqnarray*}
  Q_n^{(k)}P_0^{(k)} &=& {n\choose k}(-1)^k\frac{(k-1)\theta/2}{1+(k-1)\theta/2},\>2 \leq k \leq n
 \nonumber \\
 Q_n^{(1)}P_0^{(1)} &=& -\sum_{i=1}^n{n\choose i}(-1)^{i-1}\frac{1}{1 + (i-1)\theta/2}.
\end{eqnarray*}
The definitions of $Q_n^{(k)}$ and $P_j^{(k)}$ in (\ref{QP:0}) correctly satisfy these products.
\end{proof}
\subsection{Population replacement}
Forward in time the times at which the population is replaced by the offspring of a single individual form a renewal process $\{T_j\}_{j=1}^\infty$ with independent exponential increments $S_j=T_j-T_{j-1}$, $j \geq 0$ ($T_0=0$). We find the density of type 1 individuals when $k$ population replacements have taken place and express the transition density $f(\xi;x,t)$ as a sum of these densities.
\begin{thm} Let $f_k(\xi;x,t)$ be the joint probability that there are $k\geq 1$ population replacements in $(0,t)$ and the density of the frequency of type 1 individuals at time $t$, when initially the frequency is $x$. Then
\begin{equation}
f_k(\xi;x,t) = 
\begin{cases}
\frac{2kr_1(\xi;t)}{\theta t(\xi-p)}
\Bigg (1+\frac{2}{\theta t}
\log\Big (\frac{\xi - p}{1-p}\Big )\Bigg )^{k-1}
(t^k/k!)e^{-t},
&\xi > p + (1-p)e^{-\theta t/2}\\
\frac{2kr_2(\xi;t)}{\theta t(p-\xi)}
\Bigg (1+\frac{2}{\theta t}\log\Big (\frac{p-\xi}{p}\Big )\Bigg )^{k-1}
(t^k/k!)e^{-t},
&\xi < p(1-e^{-\theta t/2}),
 \end{cases}
\label{density:k}
\end{equation}
where
\[
r_1(\xi;t) = p+(x-p)e^{-\theta t/2}\frac{1-p}{\xi-p},\>
r_2(\xi;t) =1-p-(x-p)e^{-\theta t/2}\frac{p}{p-\xi}.
\]
\end{thm}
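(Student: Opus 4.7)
The plan is to use the same coalescent-duality argument that proves Theorem \ref{theorem:1}, but condition on both $N(t)=k$ and the time $\tau$ since the most recent replacement. Because $\{T_j\}$ is a rate-$1$ Poisson process, the joint density of $\{N(t)=k\}$ with $t-T_k=\tau$ is
$$
\frac{(t-\tau)^{k-1}}{(k-1)!}\,e^{-t}\,d\tau,\qquad \tau\in(0,t),
$$
obtained directly from $P(N(t-\tau)=k-1)\cdot 1\cdot P(N(t)-N(t-\tau)=0)$, or equivalently from the order-statistics density of the maximum of $k$ uniforms on $(0,t)$ times the Poisson probability $t^k e^{-t}/k!$.

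Conditional on this event, $\xi(t)$ is determined as in the second case of Theorem \ref{theorem:1}: at the last replacement time $t-\tau$ the population descends from a single individual, whose type is obtained by tracing one lineage back to time $0$, so $\xi(t)=p_{i1}(\tau)$ with probability $q_i(t-\tau;x)$ for $i=1,2$. The intermediate $k-1$ replacements do not interfere, since a single traced lineage passes through each of them and merely accumulates mutations over the whole interval $(0,t-\tau)$. Therefore
$$
f_k(\xi;x,t)=\sum_{i=1}^{2}\int_0^t \delta\bigl(\xi-p_{i1}(\tau)\bigr)\,q_i(t-\tau;x)\,\frac{(t-\tau)^{k-1}}{(k-1)!}\,e^{-t}\,d\tau,
$$
which is evaluated by the change of variables $\tau\mapsto\xi$ already used in Theorem \ref{theorem:1} via (\ref{basiceq:0}): on $\{\xi>p+(1-p)e^{-\theta t/2}\}$, $e^{-\theta\tau/2}=(\xi-p)/(1-p)$ and $|d\tau/d\xi|=2/\bigl(\theta(\xi-p)\bigr)$, with the analogous pair on $\{\xi<p(1-e^{-\theta t/2})\}$.

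The rest is algebra. The two crucial identities are
$$
t-\tau=t\left(1+\frac{2}{\theta t}\log\frac{\xi-p}{1-p}\right),\qquad e^{-\theta(t-\tau)/2}=e^{-\theta t/2}\cdot\frac{1-p}{\xi-p},
$$
the second of which converts $q_1(t-\tau;x)=p+(x-p)e^{-\theta(t-\tau)/2}$ into precisely $r_1(\xi;t)$; the parallel identity in the lower region sends $q_2(t-\tau;x)$ to $r_2(\xi;t)$. Rewriting the prefactor $1/(k-1)!$ as $k/k!$ then produces the stated form of (\ref{density:k}). The only real obstacle is bookkeeping the substitutions; a reassuring consistency check is that summing (\ref{density:k}) over $k\geq 1$ reproduces the continuous part of (\ref{density:1}), since $\sum_{k\geq 1} t^{k-1}a^{k-1}/(k-1)!=e^{ta}$ with $a=1+(2/\theta t)\log((\xi-p)/(1-p))$ produces the factor $\bigl((\xi-p)/(1-p)\bigr)^{2/\theta}$ after canceling $e^{-t}$.
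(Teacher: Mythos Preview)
Your proof is correct and shares the paper's overall architecture: condition on the time $T_k$ of the last replacement, show that the population type just after $T_k$ is $i$ with probability $q_i(T_k;x)$, and then change variables $\tau=t-T_k\mapsto\xi$ via (\ref{basiceq:0}) exactly as in Theorem~\ref{theorem:1}. The one genuine difference is how you establish that the intermediate $k-1$ replacements are immaterial. The paper works forward in time: it writes the replacement-type chain as the matrix product $[q_1(t_1;x),\,q_2(t_1;x)]\prod_{j=2}^k P(s_j)$, diagonalizes $P(s)$ explicitly, and reads off that the product collapses to $[q_1(t_k;x),\,q_2(t_k;x)]$. You reach the same conclusion dually, tracing a single lineage backward from $T_k$ through the earlier replacement events and observing that it simply accumulates mutations over all of $(0,T_k)$; in effect you are invoking the Chapman--Kolmogorov identity $P(s)P(s')=P(s+s')$ for the single-line mutation process without writing the matrices down. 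Your route is a little more direct; the paper's explicit diagonalization makes the semigroup collapse visible and carries over verbatim to the $d$-type setting.
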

\begin{proof}
Let $P(s) =\big (p_{ij}(s)\big )_{i,j=1}^2$ be a transition matrix for the probability of a replacement of a population of type $i$ individuals by type $j$ individuals at time $s$. The entries of $P(s)$ are those in Theorem \ref{theorem:1}. That is
\begin{equation*}
P(s) =
\begin{bmatrix}
p + (1-p)e^{-\theta s/2}&(1-p)(1-e^{-\theta s/2})\\
p(1-e^{-\theta s/2})&1-p + pe^{-\theta s/2}
\end{bmatrix}.
\end{equation*}
The left and right eigenvector matrices of $P(s)$ are
\[
U = 
\begin{bmatrix}
p&1-p\\
1&-1
\end{bmatrix},\>\>
U^{-1} = 
\begin{bmatrix}
1&1-p\\
1&-p
\end{bmatrix}
\]
with eigenvalues $1,e^{-\theta s/2}$. Therefore 
\[
P(s)= U^{-1}\text{Diag}\big (1,e^{-\theta s/2}\big )U,\>\>
\prod_{j=2}^kP(s_j)=U^{-1}\text{Diag}\big (1,e^{-\theta (t_k-t_1)/2}\big )U,
\]
where the product is taken from $P(s_1)$ on the left to $P(s_k)$ on the right.
At $t_1^{+0}$ the population replacement is of type 1 with probability $q_1(t_1;x)$.
Thus the probability of type 1 or 2 replacements at time $t_k$
conditional on $\{t_j\}_{j=1}^k$ is
\begin{eqnarray}
R\big (x;\{t_j\}_{j=1}^k\big )&:=&
\begin{bmatrix}
q_1(t_1;x),&1-q_1(t_1;x)
\end{bmatrix}
\prod_{j=2}^kP(s_j)\nonumber \\
&=& 
\begin{bmatrix}
p+(x-p)e^{-t_k\theta/2},
1-p-(x-p)e^{-t_k\theta/2}
\end{bmatrix}.
\end{eqnarray}
Then the conditional frequency of type 1 individuals at time $t$ is 
\begin{eqnarray}
&&p_{11}(t-t_k)\text{~with~probability~}r_1\big (x;\{t_j\}_{j=1}^k\big )
\nonumber \\
&&p_{21}(t-t_k)\text{~with~probability~}r_2\big (x;\{t_j\}_{j=1}^k\big ),
\label{cdl:0}
\end{eqnarray}
where
 $R\big (x;\{t_j\}_{j=1}^k\big )
= \big [r_1\big (x;\{t_j\}_{j=1}^k\big ),r_2\big (x\{t_j\}_{j=1}^k\big )\big ].
$
The probabilities in (\ref{cdl:0}) only depend on the time $t_k$.
$T_k$ has a probability distribution, conditional on $k$ replacements in $(0,t)$, of 
\begin{equation}
k(t_k/t)^{k-1}t^{-1},\>\>0 < t_k < t.
\label{cdl:1}
\end{equation}
A similar calculation to that in Theorem \ref{theorem:1} gives that the probability of having $k\geq 1$ replacements in $(0,t)$ and density of $\xi(t)$ is (\ref{density:k}).
There is an expansion of the transition density
\begin{equation}
f(\xi;x,t) = \sum_{k=1}^\infty f_k(\xi;x,t).
\label{trexp:0}
\end{equation}
It must be that (\ref{trexp:0}) holds and a check shows this to be true.
\end{proof}
\section{The star-shaped coalescent with $d$ types}\label{Section 4}
In this section an extension is made to $d$ types, then the infinitely-many-sites limit as $d \to \infty$ is taken.
The model is a star-shaped coalescent with mutation occurring on the lineages at rate $\theta/2$, with the type chosen as $i$ with probability $p_i=\theta_i/\theta$, $i=1,\ldots,d$. 
 Analysis of the model in Theorem \ref{theorem:7} follows that when $d=2$ in Section \ref{Section 2}. The proof is omitted because of similarity with Theorem \ref{theorem:1}.
\begin{thm}\label{theorem:7}
Let $\{\bm{\xi}(t)\}_{t\geq 0}$ be the frequency of $d$ types of individuals at time $t$, when $\bm{\xi}(0)=\bm{x}$, in the Fleming-Viot process with a generator acting on functions $g(\bm{x}) \in C^2(\Delta_d)$,
$\Delta_d = \{\bm{x};x_1+\cdots+x_d=1\}$, of
\begin{equation}
{\cal L}g(\bm{x}) =
\sum_{i=1}^dx_i\big (g(\bm{e}_i) - g(\bm{x})\big )
+ (\theta/2)\sum_{i=1}^d(p_i-x_i)g_i(\bm{x}).
\label{gend:0}
\end{equation}
Denote $p_{ij}(t)$, $i,j = 1,\ldots d$ as the probability of a change of type in a single line of the star-shaped coalescent forward in time from $i$ to $j$ in time $t$ and $q_i(t;\bm{x})$ as the probability a single gene chosen at time $t$ forward in time is of type $i = 1,\ldots,d$. Specifically
\[
p_{ij}(t) = \delta_{ij}e^{-\theta t/2} + (1-e^{-\theta t/2})p_j,
\]
\[
q_i(t;\bm{x}) = x_ie^{-\theta t/2}+(1-e^{-\theta t/2})p_i.
\]
Then 
\begin{equation}
\bm{\xi}(t) = 
\begin{cases}
\bm{q}(t;\bm{x})&\text{~with~probability~}e^{-t}\\
\bm{p}_i(\tau)&\text{~with~probability~}(1-e^{-t})q_i(t-\tau;\bm{x}),\>i=1,\ldots,d,
\end{cases}
\end{equation}
where $\bm{p}_i(\tau)$ is the $i$th row of $P(\tau) = \big (p_{ij}(\tau)\big )_{i,j=1}^d$ and $\tau$ has a truncated exponential density (\ref{trunc}). The continuous part of the distribution of $\bm{\xi}(t)$ has a density in the $i$th region for $\xi_i(t)$ of
\begin{equation}
\Big (p_i + (x_i-p_i)e^{-\theta t/2}\frac{1-p_i}{\xi_i-p_i}\Big )
\frac{2}{\theta}
\Big (\frac{\xi_i - p_i}{1 - p_i}\Big )^{2/\theta -1}
\frac{1}{1-p_i}
\>\>,\xi_i > p_i + (1-p_i)e^{-\theta t/2},
\end{equation}
and 
\[
\xi_j(t) = \Big (1 - \frac{\xi_i(t) - p_i}{1 - p_i}\Big )p_j,\>\>j\ne i.
\]
The stationary density in the $i$th region is
\begin{equation}
p_i\frac{2}{\theta}
\Big (\frac{\xi_i - p_i}{1 - p_i}\Big )^{2/\theta -1}
\frac{1}{1-p_i},\>\>\xi_i > p_i
\label{dstationary:0}
\end{equation}
and
\[
\xi_j= \Big (1 - \frac{\xi_i - p_i}{1 - p_i}\Big )p_j,\>\>j\ne i.
\]
\end{thm}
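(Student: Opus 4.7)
The plan is to mimic the dual-process argument of Theorem~\ref{theorem:1}, replacing the two-type binomial at the terminal step with a $d$-type multinomial; every other structural step carries across essentially unchanged. First I would derive $p_{ij}(t)$ elementarily: along one line mutations arrive as a Poisson process of rate $\theta/2$, and conditional on at least one mutation (probability $1-e^{-\theta t/2}$) the type at time $t$ is that of the most recent mutation, which is $j$ with probability $p_j$ independently of the pre-mutation type; the marginal $q_i(t;\bm{x})=\sum_j x_j p_{ji}(t)$ then reduces to the stated form.

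Next I would invoke the star-shaped $\Lambda$-coalescent as a moment dual. Sample $n$ individuals at time $t$ and let $T$ be the backward time to their most recent common ancestor; because the star-shaped rate is $1$ and every replacement coalesces all lineages, $T\sim\mathrm{Exp}(1)$. On $\{T\geq t\}$ the $n$ sample lineages are mutually independent back to time $0$, so the vector $\bm{Z}(t)$ of sample type counts is multinomial with parameters $(n,\bm{q}(t;\bm{x}))$. On $\{T=\tau<t\}$ the single MRCA lineage continues back, as a single line subject only to parent-independent mutation, to time $0$, and so has type $i$ with probability $q_i(t-\tau;\bm{x})$ (the crucial point being that earlier replacements in the population do not disturb the identity of a single ancestral line); given MRCA type $i$ the $n$ sample branches evolve independently over $(t-\tau,t)$ and $\bm{Z}(t)$ is multinomial with parameters $(n,\bm{p}_i(\tau))$. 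Writing the two cases exactly as in (\ref{case:a}) and (\ref{case:b}) and taking $n\to\infty$ by the law of large numbers in each multinomial yields the mixture representation for $\bm{\xi}(t)$: an atom of mass $e^{-t}$ at $\bm{q}(t;\bm{x})$, together with, for each $i$, an absolutely continuous piece $\bm{\xi}(t)=\bm{p}_i(\tau)$ carrying weight $(1-e^{-t})q_i(t-\tau;\bm{x})$ and with $\tau$ drawn from the truncated exponential (\ref{trunc}).

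For the density formula I would note that the continuous part is supported on $d$ one-dimensional arcs inside $\Delta_d$: on the $i$th arc, $\xi_i=p_{ii}(\tau)=p_i+(1-p_i)e^{-\theta\tau/2}$ and $\xi_j=p_{ij}(\tau)=(1-e^{-\theta\tau/2})p_j$ for $j\neq i$, which immediately yields the stated linear constraint $\xi_j=(1-(\xi_i-p_i)/(1-p_i))p_j$. A change of variable $\tau\mapsto\xi_i$ identical to (\ref{basiceq:0}) in the two-type proof, with Jacobian $(2/\theta)/(\xi_i-p_i)$, converts $q_i(t-\tau;\bm{x})e^{-\tau}\,d\tau$ into the displayed density on $\{\xi_i>p_i+(1-p_i)e^{-\theta t/2}\}$; letting $t\to\infty$ erases the atom and the $e^{-\theta t/2}$-terms and reproduces (\ref{dstationary:0}). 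The main obstacle, as already at $d=2$, is justifying the MRCA's type distribution through the single-line backward argument above; once this is secured the remainder is a one-parameter coordinate change, and the higher-dimensional simplex geometry is essentially vacuous, since the continuous support is a union of one-dimensional segments rather than a full-dimensional region.
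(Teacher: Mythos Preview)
Your proposal is correct and matches the paper's intended approach exactly: the paper omits the proof of Theorem~\ref{theorem:7}, stating only that ``Analysis of the model in Theorem~\ref{theorem:7} follows that when $d=2$ in Section~\ref{Section 2}'' and that the proof is omitted because of similarity with Theorem~\ref{theorem:1}. Your replacement of the binomial sampling by a multinomial, the same dual backward argument conditioning on $\{T\geq t\}$ versus $\{T=\tau<t\}$, and the one-dimensional change of variable on each arc are precisely the steps the paper has in mind.
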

\begin{rem}
There is a similar representation to that in Remark \ref{Remark 1} for the stationary distribution (\ref{dstationary:0}) of $\bm{\xi}$. 
Let $\eta$ be a random variable with density $(2/\theta)\eta^{2/\theta -1}$, $0 < \eta < 1$. Then for $i=1,\ldots,d$, with probability $p_i$,
\begin{eqnarray}
\xi_i &=& (1-\eta)p_i + \eta,
\nonumber \\
\xi_j &=& (1-\eta)p_j,\>\>j\ne i.
\label{drep:0}
\end{eqnarray}
\end{rem}
\begin{rem}
The random variables $\{\xi_i\}$ are exchangeable if $p_i=1/d$, $i=1,\ldots ,d$ with one taking the value $(1-\eta)/d + \eta$ and the $d-1$ other random variables taking the value $(1-\eta)/d$. In the limit as $d\to \infty$ there is one frequency of $\eta$ and an infinite number of \emph{dust} frequencies of total mass $1 - \eta$. The limit sampling distribution in $n$ unlabelled individuals is
\begin{equation}
{n \choose j}\mathbb{E}\big [\eta^j(1-\eta)^{n-j}\big ]
= \frac{n!}{j!}\cdot \frac{(2/\theta)_{(j)}}{(1+2/\theta)_{(n)}},\label{sampling:0}
\end{equation}
where there is one sampling frequency of $j$ and $n-j$ distinct frequencies of 1. Straightforwardly there is one type with probability 
$\mathbb{E}\big [\eta^n\big ]$ and $k$ types with probability 
${n\choose k-1}\mathbb{E}\big [\eta^{n-k+1}(1-\eta)^{k-1}\big ]$. This sampling distribution agrees with the sampling distribution (27) in \citet{M2006} after simplification and identification of parameters.
\end{rem}
\subsubsection{A general mutation model}
A similar theorem to Theorem \ref{theorem:7} holds for the general Markov mutation model without the exact detail of the exact transition density and stationary density. The mutation model is that mutations occur along lines at rate $\theta/2$ and, forward in time changes of type are made from $i$ to $j$ according to a transition probabiity matrix $\bm{P}$. It is assumed that $\bm{P}$ is recurrent with stationary distribution $\bm{\gamma}$.
\begin{thm}\label{theorem:8}
Let $\{\bm{\xi}(t)\}_{t\geq 0}$ be the frequency of $d$ types of individuals at time $t$, when $\bm{\xi}(0)=\bm{x}$, in the Fleming-Viot process with a generator acting on functions $g(\bm{x}) \in C^2(\Delta_d)$,
$\Delta_d = \{\bm{x};x_1+\cdots+x_d=1\}$, of
\begin{equation}
{\cal L}g(\bm{x}) =
\sum_{i=1}^dx_i\big (g(\bm{e}_i) - g(\bm{x})\big )
+ (\theta/2)\sum_{i=1}^d\Big (\sum_{j=1}^d(x_jp_{ji}-x_i)\Big )g_i(\bm{x}).
\label{ggend:0}
\end{equation}
Denote $p_{ij}(t)$, $i,j = 1,\ldots d$ as the probability of a change of type in a single line of the star-shaped coalescent forward in time from $i$ to $j$ in time $t$ and $q_i(t;\bm{x})$ as the probability a single gene chosen at time $t$ forward in time is of type $i = 1,\ldots,d$. Specifically
\[
\bm{P}(t) = \exp \left \{(\theta/2)(\bm{P}-\bm{I})t\right \},\>\>
q_i(t;\bm{x}) = \sum_{k=1}^dx_kp_{ki}(t).
\]
Then 
\begin{equation}
\bm{\xi}(t) = 
\begin{cases}
\bm{q}(t;\bm{x})&\text{~with~probability~}e^{-t}\\
\bm{p}_i(\tau)&\text{~with~probability~}(1-e^{-t})q_i(t-\tau;\bm{x}),\>i=1,\ldots,d,
\end{cases}
\end{equation}
where $\bm{p}_i(\tau)$ is the $i$th row of $\bm{P}(\tau)$ and $\tau$ has a truncated exponential density (\ref{trunc}). If $\bm{\xi}$ is a random variable with the stationary distribution of $\{\bm{\xi}(t)\}_{t\geq 0}$, then 
\begin{equation}
\bm{\xi} =^{\cal D}\bm{p}_i(\tau)\text{~with~probability~}\gamma_i,\>i=1,\ldots,d
\label{generalstat:0}
\end{equation}
where $\tau$ has an exponential distribution of rate 1.
\end{thm}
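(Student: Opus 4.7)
The plan is to reuse the coalescent-duality argument of Theorems~\ref{theorem:1} and~\ref{theorem:7} essentially verbatim, with the only change being that the parent-independent mutation semigroup is replaced by the general Markov semigroup $\bm{P}(t) = \exp\{(\theta/2)(\bm{P}-\bm{I})t\}$ acting along each lineage. The star-shaped structure $\Lambda(\{1\})=1$ is unaffected by the change in mutation model: the backward genealogy of any sample experiences its unique complete-coalescence event at the rate-$1$ exponential time $T$, independent of the Markov jumps along the lineages.

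For the transition formula I would sample $n$ individuals at forward time $t$ and condition on $T$. On $\{T>t\}$ (probability $e^{-t}$) the $n$ lineages reach the origin unmerged and carry independent types with marginal law $\bm{q}(t;\bm{x}) = \bm{x}^\top \bm{P}(t)$; the law of large numbers gives the deterministic frequency $\bm{q}(t;\bm{x})$. On $\{T = t-\tau \le t\}$ the $n$ lineages merge into a single ancestor whose type has conditional distribution $\bm{q}(t-\tau;\bm{x})$, and conditionally on ancestor type $i$ the $n$ descendants mutate independently along lines of length $\tau$, so their empirical frequencies converge to $\bm{p}_i(\tau)$. The conditional density of $\tau$ given $T\le t$ is the truncated exponential~\eqref{trunc}, producing the advertised mixture. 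This is the argument of Theorem~\ref{theorem:1} word-for-word once one swaps in the new $\bm{P}(t)$.

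For the stationary distribution I would let $t\to\infty$ in that mixture. The atom at $\bm{q}(t;\bm{x})$ has vanishing weight $e^{-t}$, and the truncated-exponential density $(1-e^{-t})^{-1}e^{-\tau}\mathbbm{1}_{(0,t)}(\tau)$ converges to $e^{-\tau}$ on $(0,\infty)$. Recurrence of $\bm{P}$ with stationary distribution $\bm{\gamma}$ gives $\bm{P}(s) \to \mathbf{1}\bm{\gamma}^\top$ at a geometric rate as $s\to\infty$, so $q_i(t-\tau;\bm{x})=\sum_k x_k\,p_{ki}(t-\tau)\to\gamma_i$ for every fixed $\tau$, independently of the initial state $\bm{x}$. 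Dominated convergence then yields $\bm{\xi} \stackrel{\cal D}{=} \bm{p}_i(\tau)$ with probability $\gamma_i$ and $\tau \sim \text{Exp}(1)$, which is~\eqref{generalstat:0}.

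The main technical point to watch is the interchange of limits in the last step. For $\tau$ close to $t$ the approximation $q_i(t-\tau;\bm{x})\approx\gamma_i$ fails, since $t-\tau$ is then small; however the exponential weight concentrates on $\tau = O(1)$, so the mass assigned to that pathological region is $O(e^{-t/2})$, and the geometric rate $\|\bm{P}(s) - \mathbf{1}\bm{\gamma}^\top\| = O(e^{-cs})$ for finite-state recurrent chains dominates the exponential tail of $\tau$ easily. A complementary verification via $\mathbb{E}[{\cal L}g(\bm{\xi})]=0$, differentiating $\bm{p}_i(\tau)$ through $\tfrac{d}{d\tau}\bm{p}_i(\tau) = (\theta/2)\bm{p}_i(\tau)(\bm{P}-\bm{I})$ and integrating by parts in $\tau$ under the $\gamma_i e^{-\tau}$ weight, supplies the analogue of the direct check in~\eqref{verify:0}--\eqref{verify:1} and avoids reliance on the limit argument.
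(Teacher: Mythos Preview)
The paper does not give an explicit proof of Theorem~\ref{theorem:8}; it simply introduces the result by saying ``A similar theorem to Theorem~\ref{theorem:7} holds for the general Markov mutation model,'' and Theorem~\ref{theorem:7} in turn has its proof ``omitted because of similarity with Theorem~\ref{theorem:1}.'' Your proposal carries out precisely this implied argument: you run the sample-of-$n$ coalescent duality from Theorem~\ref{theorem:1}, replace the parent-independent mutation probabilities by the general semigroup $\bm{P}(t)=\exp\{(\theta/2)(\bm{P}-\bm{I})t\}$, and pass to the limit $t\to\infty$ using ergodicity of the continuous-time mutation chain to obtain the $\gamma_i$ weights. This is correct and is exactly the route the paper points to; your added remarks on controlling the region $\tau\approx t$ and on the direct $\mathbb{E}[{\cal L}g(\bm{\xi})]=0$ check are more detail than the paper itself supplies.
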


\section{The star-shaped coalescent with general frequency dependent change}
Consider a population of individuals of types 1 or 2 evolving in time and let $\xi(t)$ be the frequency of type 1 individuals at time $t$, when $\xi(0)=x$. $\{\xi(t)\}_{t\geq 0}$ is a Fleming-Viot process with with generator
\begin{equation}
{\cal L}g(x) = x\big [g(1)-g(x)\big ] + (1-x)\big [g(0) - g(x)\big ] + v(x)g^\prime (x).
\label{gen:0a}
\end{equation}
At time $t$ after a replacement and before the next replacement the frequency of type 1 individuals is $\chi (t)$ satisfying
\begin{equation}
\frac{d}{dt}\chi (t) = v(\chi(t))
\label{determ:0}
\end{equation}
with $\chi (0) = 1$ or $0$ depending on the type of the replacement being respectively 1 or 2.
$\{\xi(t)\}_{t\geq 0}$ is a jump process such that when there is a jump the population is completely replaced by either all type 1 genes or all type 2 genes. The time $T$ between jumps has an exponential distribution with parameter 1. Between these replacements $\xi(t)$ changes deterministically governed by $v(x)$. Typically the model is one with mutation and selection where
\begin{equation}
v(x) = \cfrac{1}{2}(\theta_1 - \theta x) + \cfrac{1}{2}\beta x(1-x).
\label{mutsel:0}
\end{equation}
with $\chi (0) = 1$ or $0$ depending on the type of the replacement being respectively 1 or 2.

For simplicity we restrict attention to two types and the stationary distribution of $\{\xi(t)\}_{t\geq 0}$ in this section, though there are also analogous results to the case with no selection when there are $d$ types and there is also a solution for the transition functions of $\{\xi(t)\}_{t\geq 0}$. 
\begin{thm}
Let $\mu(t)$ be the solution to (\ref{determ:0}) when a type 1 replacement occurs with $\mu(0)=1$ and $\nu(t)$ the solution when a type 2 replacement occurs with $\nu(0)=0$. Let $P=(p_{ij})$ be the transition matrix between jumps of type $i,j \in \{1,2\}$.  Then 
\begin{equation}
P= \begin{pmatrix}
 \mathbb{E}[\mu(T)]&\mathbb{E}[1-\mu(T)]\\
 \mathbb{E}[\nu(T)]&\mathbb{E}[1-\nu(T)]
 \end{pmatrix}.
\label{trmatrix:0}
\end{equation}
$T$ is exponential with rate 1, and $P$ describes then skeleton transition matrix between the states of the population at instants after replacement when the population is composed of just one type.
The stationary distribution of the type of replacement $(\pi_1,\pi_2)$ is the stationary distribution $(\pi_1,\pi_2)$ of $P$,
that is
\begin{equation}
\pi_1 = \frac{\mathbb{E}[\nu(T)]}{\mathbb{E}[\nu(T)+ 1-\mu(T)]}.
\label{stat:g}
\end{equation}
The stationary distribution of $\{\xi(t)\}_{t\geq 0}$ is the distribution of $\mu(\tau)$ if the last replacement was type 1 or $\nu(\tau)$ it was type 2, where $\tau$ is an exponential random variable of rate 1. 
A random variable $\xi$ with this stationary distribution has a density
\begin{equation}
f(\xi;v) = \pi_1 e^{-\mu^{-1}(\xi)}
\frac{1}{|\mu^\prime ( \mu^{-1}(\xi) )|}
+ \pi_2e^{-\nu^{-1}(\xi)}
\frac{1}{|\nu^\prime(\nu^{-1}(\xi))|}.
\label{general_d:0}
\end{equation}
The mean $\mathbb{E}\big [\xi\big ] = \pi_1$.
\end{thm}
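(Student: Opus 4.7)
The proof divides naturally into three stages: deriving the skeleton transition matrix $P$, solving for its stationary distribution, and then translating to a distribution for $\xi$ itself via the memoryless property of the exponential holding times.

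First I would set up the embedded jump chain on the two ``pure'' states $\{1,0\}$ (meaning a population consisting entirely of one type, just after a replacement). Between jumps the frequency evolves deterministically according to (\ref{determ:0}), so starting from state $1$ the frequency at time $t$ is $\mu(t)$, and starting from state $0$ it is $\nu(t)$. At the next jump, which occurs after an $\mathrm{Exp}(1)$ time $T$, the new replacement is of type $1$ with probability equal to the current frequency (the rate kernel in (\ref{gen:0a}) picks type $i$ with probability $x_i$). Averaging over $T$ gives $p_{11}=\mathbb{E}[\mu(T)]$, $p_{21}=\mathbb{E}[\nu(T)]$, and the remaining entries by complementation, yielding (\ref{trmatrix:0}). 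Solving $\pi P=\pi$ with $\pi_1+\pi_2=1$ then gives (\ref{stat:g}) in a couple of lines.

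Next I would lift this to a stationary distribution for $\{\xi(t)\}_{t\geq 0}$ by conditioning on the type of the last replacement and its time $\tau$ ago. Because replacement epochs form a Poisson process of rate $1$, in stationarity the age of the current interval (the elapsed time since the last jump) is itself $\mathrm{Exp}(1)$, independent of the jump-chain state, by the memoryless/inspection-paradox argument for Poisson processes. Conditional on the last replacement being of type $1$ (probability $\pi_1$) the frequency is $\mu(\tau)$; conditional on type $2$ (probability $\pi_2$) it is $\nu(\tau)$. Formula (\ref{general_d:0}) is then an elementary change of variables: if $\xi=\mu(\tau)$ with $\tau\sim e^{-\tau}d\tau$, then $\tau=\mu^{-1}(\xi)$ and $d\tau=d\xi/|\mu'(\mu^{-1}(\xi))|$, and similarly for the $\nu$ branch. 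The densities from the two branches are weighted by $\pi_1$ and $\pi_2$.

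Finally, for the mean, I would compute
\[
\mathbb{E}[\xi]=\pi_1\mathbb{E}[\mu(\tau)]+\pi_2\mathbb{E}[\nu(\tau)]=\pi_1 p_{11}+\pi_2 p_{21},
\]
since $\tau$ and $T$ are both $\mathrm{Exp}(1)$. The right-hand side is just the first coordinate of $\pi P$, which equals $\pi_1$ by stationarity, giving $\mathbb{E}[\xi]=\pi_1$.

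The main obstacle, and the only step that needs a careful justification rather than bookkeeping, is the passage from the jump-chain stationary distribution to the time-stationary distribution of $\xi(t)$. One has to argue that in stationarity the pair (type of last replacement, age of current interval) is distributed as $\pi\otimes\mathrm{Exp}(1)$; this can be done directly by verifying that this candidate distribution is invariant under the semigroup generated by (\ref{gen:0a}), or alternatively by noting that $\{\xi(t)\}$ is a piecewise-deterministic Markov process driven by an independent rate-$1$ Poisson clock so that the PASTA-type independence of the age from the jump-chain state is immediate. Everything else is a change of variables or a one-line linear-algebra identity.
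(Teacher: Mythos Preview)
Your proposal is correct and follows essentially the same route as the paper: derive $P$ from the deterministic flow plus the $\mathrm{Exp}(1)$ holding time, read off $(\pi_1,\pi_2)$ as the stationary distribution of $P$, use the renewal/age argument to get $\tau\sim\mathrm{Exp}(1)$ for the time since the last replacement, and change variables to obtain the density. Your treatment of $\mathbb{E}[\xi]=\pi_1 p_{11}+\pi_2 p_{21}=(\pi P)_1=\pi_1$ is in fact more explicit than the paper's, which simply says the mean ``follows from'' the representation (\ref{repxx}).
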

\begin{proof}
The frequency of type 1 individuals at the instant before replacement is either $\mu(T)$ or $\nu(T)$ depending on whether the prior replacement was type 1 or 2. A choice of a type 1 or 2 replacement is then either 
 $\mathbb{E}\big [\mu(T)\big ]$ or $\mathbb{E}\big [\nu(T)\big ]$ which is the first column of $P$. The second column follows similarly. The stationary distribution is clearly $(\pi_1,\pi_2)$. The times of replacement form a renewal process with exponentially distributed increments $T$ of rate 1. The asymptotic distribution of the time from the last replacement to a given time point between two replacements which cover the point is again exponential $\tau$ of rate 1. Therefore a random variable $\xi$ with the stationary distribution satisfies
 \begin{equation}
 \xi =^{\cal D}
 \begin{cases}
 \mu(\tau)&\text{with~probability~}\pi_1,\\
 \nu(\tau)&\text{with~probability~}\pi_2.
 \end{cases}
 \label{repxx}
 \end{equation}
 The density is then (\ref{general_d:0}) after a change of variable $\tau \to \xi$. The inverse functions $\mu^{-1}(\xi)$ and $\nu^{-1}(\xi)$ may have disjoint support regions, such as in (\ref{density:2}).
The mean of $\xi$ being $\pi_1$ follows from (\ref{repxx}).
\end{proof}
\subsection{Fixation probabilities in a model with selection}
If there is no stationary distribution then the probability of fixation of type 1 individuals is the probability that the first jump is of type 1. Let $P_1(x)$ be the probability of fixation beginning at $x$.
Consider the particular selective model where there is no mutation and  $v(x)=(\beta/2)x(1-x)$.
Then
\begin{equation}
P_1(x) = \mathbb{E}\big [\chi(T)\big ],
\label{fix:0}
\end{equation}
where $\chi$ satisfies (\ref{determ:0}) with $\chi(0) = x$.
The logistic curve solution to (\ref{determ:0}) is 
\[
\chi(t) = \frac{x}{(1 - x)e^{-\beta t/2} + x}
\]
and 
\begin{equation}
P_1(x) = \int_0^\infty
e^{-t}\chi (t)dt
=
(2/\beta)x\int_0^1\frac{z^{2/\beta - 1}}{1 - (1-x)(1-z)}dz.
\label{P1:0}
\end{equation}
The probability fixation of type 2 individuals, from an initial frequency of $y=1-x$ is, from (\ref{P1:0}), 
\begin{equation}
P_2(y) = 1 - (2/\beta)(1-y)\int_0^1\frac{z^{2/\beta -1}}{1 - y(1-z)}dz.
\label{P2:0}
\end{equation}

\subsection{Mutation and selection}
If there is mutation between types 1 and 2 then a stationary distribution exists.
\begin{thm}
In a model with mutation and selection 
\begin{equation*}
v(x) = \cfrac{1}{2}(\theta_1 - \theta x) + \cfrac{1}{2}\beta x(1-x),
\end{equation*}
where $\theta >0$, $\beta > 0$.
Let $0 < p = \theta_1/\theta < 1$, $\phi = \theta/\beta$ and $0 < r_1 < 1$, $r_2 < 0$ be the two (real) roots of
\begin{equation}
\chi(1-\chi) + \phi(p-\chi)= 0,
\label{roots}
\end{equation}
specifically
\[
r_1,r_2=\frac{1}{2}\Big ( 1-\phi \pm \sqrt{(1-\phi)^2+4\phi p} \Big ).
\]
Then the probability of replacement transitions at time $t$, $P(t)$, has elements
\begin{eqnarray}
p_{11}(t) = \mu(t) &=& r_1 + (r_1-r_2)\frac{\frac{1-r_1}{1-r_2}e^{-\beta (r_1-r_2)t/2}}
{1 - \frac{1-r_1}{1-r_2}e^{-\beta (r_1-r_2)t/2}}\nonumber \\
p_{21}(t) = \nu(t) &=& r_1 + (r_1-r_2) \frac{\frac{r_1}{r_2}e^{-\beta (r_1-r_2)t/2}
}
{1 - \frac{r_1}{r_2}e^{-\beta (r_1-r_2)t/2}},
\label{selmut:1}
\end{eqnarray}
and $P=\mathbb{E}\big [P(T)\big ]$.
The stationary density (\ref{general_d:0}) is
\begin{eqnarray}
&&\phantom{+}\pi_1\frac{2}{\beta}
\Bigg (\frac{\xi-r_1}{\xi-r_2}\cdot \frac{1-r_2}{1-r_1}\Bigg )
^{\frac{2}{\beta(r_1-r_2)} - 1}
\frac{1}{(\xi-r_2)^2}\cdot\frac{1-r_2}{1-r_1}I\{r_1 < \xi < 1\}~~~
\nonumber \\
&&+\pi_2\frac{2}{\beta}
\Bigg (\frac{r_1-\xi}{\xi-r_2}\cdot\frac{-r_2}{r_1}\Bigg )
^{\frac{2}{\beta} - 1}
\frac{1}{(r_1-\xi)^2}\cdot\frac{-r_2}{r_1}I\{0 < \xi < r_1\}.
\label{selection:den}
\end{eqnarray}
\end{thm}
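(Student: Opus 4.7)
The plan is to apply the general framework of the preceding theorem, with all work reducing to integrating the autonomous quadratic ODE $\dot\chi = v(\chi)$ in closed form.

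First I would factor the drift. Writing $v(\chi) = \tfrac{\beta}{2}[\chi(1-\chi)+\phi(p-\chi)]$ and expanding the bracket as $-\chi^2+(1-\phi)\chi+\phi p$, Vieta's formulas applied to (\ref{roots}) give $r_1+r_2 = 1-\phi$ and $r_1 r_2 = -\phi p$, hence
\begin{equation*}
v(\chi) = \tfrac{\beta}{2}(r_1-\chi)(\chi-r_2).
\end{equation*}
Sign analysis at $\chi = 0$ and $\chi = 1$ confirms $0<r_1<1$ and $r_2<0$ and shows that $r_1$ is the unique stable fixed point in $[0,1]$.

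Second, I would separate variables using partial fractions,
\begin{equation*}
\frac{1}{r_1-r_2}\left(\frac{d\chi}{r_1-\chi}+\frac{d\chi}{\chi-r_2}\right) = \tfrac{\beta}{2}\,dt,
\end{equation*}
integrating to $\log\bigl|(\chi-r_2)/(r_1-\chi)\bigr| = \tfrac{\beta(r_1-r_2)}{2}t + C$. Taking $\chi(0)=1$ pins down $C$ from $\log\tfrac{1-r_2}{1-r_1}$; solving for $\chi(t)$ in terms of $B = \tfrac{1-r_1}{1-r_2}e^{-\beta(r_1-r_2)t/2}$, a short M\"obius manipulation gives $\chi(t) = (r_1 - r_2 B)/(1-B) = r_1 + (r_1-r_2)B/(1-B)$, which is the stated $\mu(t)$. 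The identical computation starting from $\chi(0)=0$, with $E = (r_1/r_2)e^{-\beta(r_1-r_2)t/2}$, produces $\nu(t)$. Averaging the rows $\bm{p}_i(T)$ of $\bm{P}(T)$ built from $\mu,\nu$ over $T\sim\mathrm{Exp}(1)$ then gives (\ref{trmatrix:0}), with $(\pi_1,\pi_2)$ its stationary vector by the preceding theorem.

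Third, for the stationary density I would apply formula (\ref{general_d:0}). Inversion is purely algebraic: from $\xi=\mu(t)$ one solves $B = (\xi-r_1)/(\xi-r_2)$, giving
\begin{equation*}
\mu^{-1}(\xi) = \frac{2}{\beta(r_1-r_2)}\log\frac{(\xi-r_2)(1-r_1)}{(\xi-r_1)(1-r_2)}, \qquad r_1<\xi<1.
\end{equation*}
Crucially the ODE itself supplies $|\mu'(\mu^{-1}(\xi))| = |v(\xi)| = \tfrac{\beta}{2}(\xi-r_1)(\xi-r_2)$, so no separate differentiation of the inverse is required. Substituting into (\ref{general_d:0}) and collecting the $\exp(-\mu^{-1}(\xi))$ factor with one power of $(\xi-r_1)(1-r_2)/[(\xi-r_2)(1-r_1)]$ leaves the exponent $\tfrac{2}{\beta(r_1-r_2)}-1$ and recovers the first line of (\ref{selection:den}). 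The analogous inversion for $\nu$ on $0<\xi<r_1$, using $\exp(-\nu^{-1}(\xi))=[(r_1-\xi)(-r_2)/\{(\xi-r_2)r_1\}]^{2/[\beta(r_1-r_2)]}$ together with $|\nu'| = \tfrac{\beta}{2}(r_1-\xi)(\xi-r_2)$, yields the second.

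The main obstacle is sign bookkeeping: $v$ changes sign across $r_1$, the root $r_2$ is negative, and the supports of $\mu$ and $\nu$ are disjoint, so the absolute values in the logarithm and in the inverse-function formula must be tracked carefully to land in the stated symmetric form. Once the partial fraction decomposition is in place the remainder is routine symbolic algebra in $r_1,r_2$; the disjoint support $(0,r_1)\cup(r_1,1)$ emerges automatically from $\mu(t)>r_1>\nu(t)$ for all $t>0$, mirroring the atom-free support structure already seen in (\ref{density:2}).
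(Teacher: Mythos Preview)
Your proposal is correct and follows essentially the same route as the paper: factor the quadratic $v(\chi)=\tfrac{\beta}{2}(r_1-\chi)(\chi-r_2)$, locate the roots via the signs of $Q(0)$ and $Q(1)$, separate variables with the same partial-fraction identity, integrate to the logistic-type formula for $\chi(t)$ with $\chi(0)=1$ or $0$, and then push the exponential $\tau$ through the change of variable $\xi=\chi(\tau)$ to obtain the density. Your explicit use of $|\mu'(\mu^{-1}(\xi))|=|v(\xi)|$ is a clean way to package what the paper does by direct substitution into (\ref{selection:den:1}), but it is the same computation.
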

\begin{proof}
The differential equation (\ref{determ:0}) for $\chi(t)$ is 
\begin{equation}
\frac{d}{dt}\chi = \cfrac{1}{2}\Big (\theta (p- \chi)
 + \beta \chi(1-\chi)\Big ).
\label{mutsel:1a}
\end{equation}
To obtain the solution of (\ref{mutsel:1a}) let $\phi=\theta/\beta$ and factorize the quadratic term
\begin{equation}
Q(\chi) := -\chi^2 + (1-\phi)\chi + p\phi.
\label{quadratic}
\end{equation}
Let $r_1,r_2$ be the roots of (\ref{roots}) in the statement of the theorem.
$Q(0)=p\phi > 0$, $Q(1) = -(1-p)\phi < 0$ and $Q(\chi) \to -\infty$ as 
$\chi \to -\infty$ so $0 < r_1 < 1$ and $r_2 < 0$.
(\ref{mutsel:1a}) can be written as
\[
\frac{1}{r_1-r_2}\Big (\frac{1}{\chi - r_1} - \frac{1}{\chi - r_2}\Big )
\frac{d}{dt}\chi = -\frac{\beta}{2}.
\]
Therefore
\begin{equation}
\frac{r_1-\chi(t)}{\chi(t) - r_2}=\frac{r_1-\chi(0)}{\chi(0) - r_2}
e^{-\beta (r_1-r_2)t/2}.
\end{equation}
Let 
\[
C(\chi(0);t) = \frac{r_1-\chi(0)}{\chi(0) - r_2}e^{-\beta (r_1-r_2)t/2}.
\]
The solution of (\ref{mutsel:1a}) can be written as
\begin{equation}
\chi(t)
= r_1 - (r_1-r_2)\frac{C(\chi(0);t)}{1+C(\chi(0);t)}.
\end{equation}
Now $\mu(t)=\chi(t)$ when $\chi(0)=1$ and $\nu(t)=\chi(t)$ when $\chi(0)=0$; which gives (\ref{selmut:1}). Note that $C(1;t) \leq 0$ and $C(0;t) > 0$.
To find the stationary density make a change of variable $\tau \to \xi$ where $\tau$ is exponential with rate 1 and $\xi = \chi(\tau)$.
The density, with $\chi(0)$ either 1 or 0, is
\begin{equation}
\frac{2}{\beta}\Bigg (
\frac{r_1-\xi}{\xi - r_2}\cdot \frac{\chi(0)-r_2}{r_1-\chi(0)}
\Bigg )^{\frac{2}{\beta (r_1-r_2)}}
\frac{1}{(\xi-r_2)|\xi-r_1|},
\label{selection:den:1}
\end{equation}
where the support of the distribution depends of the positivity of the expression in large brackets.
Evaluating (\ref{selection:den:1}) in the two cases gives (\ref{selection:den}).

\end{proof}
\begin{cor}
Let $a=(r_1-r_2)/2 >0$, $0 < b = (1-r_1)/(1-r_2) < 1$, and $c=-r_1/r_2 > 0$.
Then
\begin{eqnarray} 
p_{11} &=&
r_1 + 2b^{-1/(a\beta)}\beta^{-1}\int_0^by^{1/(a\beta)}(1-y)^{-1}dy
\label{pseries:0}
\\
&=&
r_1 + 2\beta^{-1}\sum_{k=0}^\infty \frac{b^{k+1}}{1/(a\beta) + k + 1};
\nonumber \\
p_{21} &=& r_1 - 2c^{-1/(a\beta)}\beta^{-1}\int_0^{c/(1+c)}y^{1/(a\beta)}(1-y)^{-1/(a\beta)-1}dy
\label{pseries:1}
\\
&=&
r_1 - 2c^{-1/(a\beta)}\beta^{-1}\sum_{k=0}^\infty
\frac{\big ((a\beta+1)/(a\beta)\big )_{(k)}}{k!}
\frac{\Big (c/(1+c)\Big )^{1/(a\beta)+k+1}}{1/(a\beta)+k+1} 
\nonumber
\end{eqnarray}
$\pi_1$ and $\pi_2$ can be computed from $p_{11}$ and $p_{21}$.
\end{cor}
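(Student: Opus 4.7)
The plan is to compute $p_{11}$ and $p_{21}$ directly from the theorem, which gives $P=\mathbb{E}[P(T)]$ with $T$ exponential of rate $1$. That is, I would start from
\[
p_{11}=\int_0^\infty e^{-t}\mu(t)\,dt,\qquad p_{21}=\int_0^\infty e^{-t}\nu(t)\,dt,
\]
and insert the closed forms (\ref{selmut:1}). Writing $\beta(r_1-r_2)/2=a\beta$ and using $b=(1-r_1)/(1-r_2)$, $c=-r_1/r_2$, these become
\[
p_{11}=r_1+2a\int_0^\infty e^{-t}\frac{be^{-a\beta t}}{1-be^{-a\beta t}}\,dt,\qquad p_{21}=r_1-2a\int_0^\infty e^{-t}\frac{ce^{-a\beta t}}{1+ce^{-a\beta t}}\,dt,
\]
where the sign in the $p_{21}$ integrand comes from $r_1/r_2=-c$.

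For $p_{11}$ I would make the substitution $y=be^{-a\beta t}$, so that $e^{-t}=(y/b)^{1/(a\beta)}$ and $dt=-dy/(a\beta y)$, the range of $y$ becoming $(0,b)$. A direct computation turns the integral into $(a\beta)^{-1}b^{-1/(a\beta)}\int_0^b y^{1/(a\beta)}(1-y)^{-1}\,dy$, and multiplying by $2a$ yields (\ref{pseries:0}) after the cancellation of $a$. Expanding $(1-y)^{-1}=\sum_{k\ge 0}y^k$ (valid since $0<b<1$) and integrating term by term gives the stated power series in $b$.

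For $p_{21}$ I would first substitute $y=ce^{-a\beta t}$ (with $y\in(0,c)$), which produces $(a\beta)^{-1}c^{-1/(a\beta)}\int_0^c y^{1/(a\beta)}(1+y)^{-1}\,dy$. To recover the form in (\ref{pseries:1}) I would then apply the second substitution $u=y/(1+y)$, so that $y=u/(1-u)$, $dy=du/(1-u)^2$, and $1+y=1/(1-u)$; the new range is $(0,c/(1+c))$ and the integrand becomes $u^{1/(a\beta)}(1-u)^{-1/(a\beta)-1}$. Expanding via the binomial series $(1-u)^{-1/(a\beta)-1}=\sum_{k\ge 0}\bigl((a\beta+1)/(a\beta)\bigr)_{(k)}u^k/k!$ (convergent for $u\in(0,c/(1+c))\subset(0,1)$) and integrating term by term produces the claimed series. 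Finally, $\pi_1,\pi_2$ follow immediately from (\ref{stat:g}) applied to the computed $p_{11}$ and $p_{21}$, since $\pi_1=(1-p_{11})/(p_{21}+1-p_{11})\cdot(\cdots)$ and the relation in the preceding theorem.

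The only subtlety is the second substitution for $p_{21}$: the elementary change $y\mapsto u=y/(1+y)$ is essential because the naive expansion of $(1+y)^{-1}$ in powers of $y$ would only converge on $(0,1)$, whereas $c$ need not be less than $1$. Passing to $u\in(0,c/(1+c))\subset(0,1)$ legitimises the series expansion regardless of the size of $c$. All remaining steps are routine substitutions and term-by-term integrations.
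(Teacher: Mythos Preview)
Your proposal is correct and follows essentially the same route as the paper: both compute $p_{11}$ and $p_{21}$ as $\mathbb{E}[\mu(\tau)]$ and $\mathbb{E}[\nu(\tau)]$ with $\tau$ exponential of rate~$1$, then reduce the resulting integrals by the substitutions $y=be^{-a\beta t}$ and (effectively) $y=ce^{-a\beta t}/(1+ce^{-a\beta t})$. The only cosmetic difference is that the paper performs the $p_{21}$ change of variable in a single step, whereas you factor it as $y=ce^{-a\beta t}$ followed by $u=y/(1+y)$; your added remark that this second step is needed to guarantee a convergent series when $c\geq 1$ is a useful clarification absent from the paper.
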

\begin{proof}
The inequalities for $a,b,c$ are elementary. Let $\tau$ be an exponential
random variable with rate 1.
\begin{eqnarray}
p_{11} &=& \mathbb{E}\big [p_{11}(\tau)\big ]
\nonumber \\
&=& r_1 + 2a\int_0^\infty e^{-t}
\frac{be^{-a\beta t}}{1 + be^{-a\beta t}}dt;
\label{pseries:0a}
\\
p_{21} &=& \mathbb{E}\big [p_{21}(\tau)\big ]
\nonumber \\
&=& r_1 -2a\int_0^\infty e^{-t}\frac{ce^{-a\beta t}}{1 + ce^{-a\beta t}}dt.
\label{pseries:1a}
\end{eqnarray}
Now make the changes of variable $y=be^{-a\beta t}$ in (\ref{pseries:0a}) to obtain (\ref{pseries:0}) and $y=ce^{-a \beta t}/(1+ce^{-a \beta t})$ in (\ref{pseries:1a}) to obtain (\ref{pseries:1}). 
\end{proof}
In a model with no selection with $\beta = 0$, $p_{11}=p_{21}=p$. Letting $\beta \to 0$ gives $r_1 \to p$, $r_2 \to -\infty$, $r_2\beta \to -\theta$
with (\ref{selmut:1}), (\ref{pseries:0}) and (\ref{pseries:1}) converging to the correct neutral quantities.
\subsection{Ancestral selection graph}
\subsubsection{Duality}
There is an ancestral selection graph which is dual to a process $\{\xi(t)\}_{t\geq 0}$ with a generator described by
\begin{equation}
{\cal L}g(x) = x\big (g(1)-g(x)\big ) + (1-x)\big (g(0) - g(x)\big )
- \frac{\beta}{2}x(1-x)\frac{\partial}{\partial x},
\label{asg:0}
\end{equation}
where $\beta >0$ and $g \in C^2([0,1])$. (If $\beta < 0$ then consider the process $\{1-\xi(t)\}_{t\geq 0}$.) For $i \in \mathbb{Z}_+$ 
\begin{equation}
{\cal L}x^i = x - x^i + \frac{i\beta}{2}(x^{i+1}-x^i).
\label{asg:1}
\end{equation}
Re-interpreting (\ref{asg:1}) as a generator acting on $i$, the dual process $\{B_n(t)\}_{t\geq 0}$ with $B_n(0)=n$ is a Markov process on $\mathbb{Z}_+$ with rates
\begin{equation}
r_{ij} = 
\begin{cases}
\frac{i\beta}{2},&j=i+1,\\
1,&j=1.
\end{cases}
\label{asgrates:0}
\end{equation}
The duality equation is, for $n\in \mathbb{Z}_+$,
\begin{equation}
\mathbb{E}_x\big [\xi(t)^n\big ] = \mathbb{E}_n\big [x^{B_n(t)}\big ],
\label{sduality:0}
\end{equation}
where expectation is on the left with respect to $\xi(t)$ and on the right with respect to $B_n(t)$.
$\{B_n(t)\}_{n=1}^\infty$ does not describe the full ancestral selection graph, but it is a lineage counting process.
 Duality is studied in the Wright-Fisher model with selection in \cite{M2009} and in a $\Lambda$-Fleming-Viot model in \citet{EGT2010}. Duality is much simpler here and we have explicit results.
\subsubsection{Stationary distribution}
There is a stationary distribution $\bm{\pi}$ for the edges in the ancestral selection graph calculated from the rates (\ref{asgrates:0}) which satisfies
\begin{equation*}
\pi_i = 
\frac{(i-1)!}{(2/\beta + 1)_{(i)}}\pi_1,\>i \geq 1.
\end{equation*}
$\pi_1=2/\beta$ is evaluated from
\begin{eqnarray*}
1 &=& \sum_{i=1}^\infty \pi_i\\
&=& \pi_1\sum_{i=1}^\infty \frac{\Gamma (i)\Gamma (2/\beta+1)}{\Gamma(2/\beta+1+i)}\\
&=& \pi_1\sum_{i=1}^\infty \int_0^1z^{i-1}(1-z)^{2/\beta}dz\\
&=& \pi_1\int_0^1(1-z)^{2/\beta-1}dz\\
&=&(\beta/2)\pi_1.
\end{eqnarray*}
Therefore
\begin{equation}
\pi_i = 
\frac{(2/\beta)(i-1)!}{(2/\beta + 1)_{(i)}},\>i \geq 1.
\label{selstat:0}
\end{equation}
Letting $t \to \infty$ in the duality equation (\ref{sduality:0}) 
\begin{equation*}
\mathbb{E}_y\big [\xi(\infty)^n\big ] = \mathbb{E}_n\big [y^{B_n(\infty)}\big ] = P_2(y),
\end{equation*}
taking into account the negative selection coefficient in (\ref{asg:0}).
Therefore a generating function for $B_n(\infty)$, which has a stationary distribution $\{\pi_i\}_{i=1}^\infty$ is
\begin{equation*}
\sum_{i=1}^\infty\pi_iy^i = P_2(y).
\end{equation*}
Calculating the coefficient of $y^i$  in $P_2(y)$ from (\ref{P2:0}) shows this to be true. This argument is used in \citet{M2009} for the Wright-Fisher diffusion with selection.
\subsubsection{Time to the ultimate ancestor} 
The ultimate ancestor is reached when $B_n(T_{\text{UA}})=1$ for the first time at a random time $T_{\text{UA}}$. The ultimate ancestor is reached with probability one, because the probability that $B_n(t) > 1$ for all $t \geq 0$ is 
\[
\lim_{N\to \infty} \prod_{j=n}^N\Big (1- \frac{1}{1+j\beta/2}\Big ) = 0.
\]
In fact the ultimate ancestor is always the most recent common ancestor in this star-shaped coalescent unlike in the Wright-Fisher model with selection where the event depends on the type of the ultimate ancestor.
Let $G_n(\varphi)$ be the Laplace transform of the time to the ultimate ancestor $T_{\text{UA}}$ from $B_n(0)=n$ individuals. It must be that 
$G_n(\varphi) = (1+\varphi)^{-1}$, $n\geq 2$. We give a formal proof of this.
$\{G_n(\varphi)\}_{n\geq 2}$ satisfies the system of equations
\begin{eqnarray}
G_n(\varphi) &=& \frac{1+n\beta/2}{\varphi +1 + n\beta/2}
\Big (\frac{1}{1+n\beta/2} + \frac{n\beta/2}{1+n\beta/2}G_{n+1}(\varphi)\Big )
\nonumber \\
&=&
\frac{1}{\varphi +1+ n\beta/2} +  \frac{n\beta/2}{\varphi +1+n\beta/2}G_{n+1}(\varphi),
\>n\geq 2
\label{weq:0}
\end{eqnarray}
with $G_1(\varphi)=1$, from a decomposition according to the first jump.
Rewrite and extend (\ref{weq:0}) for $N > n$ as
\begin{eqnarray}
(1+\varphi)G_n(\varphi) - 1 &=& \frac{n\beta/2}{\varphi + 1+n\beta/2}\big ((1+\varphi)G_{n+1}(\varphi) - 1\big )
\nonumber \\
&=&
\prod_{j=n}^N\Big ( 1 - \frac{1+\varphi}{\varphi +1 +j\beta/2}\Big )
\big ((1+\varphi)G_{N+1}-1\big ).
\label{weq:1}
\end{eqnarray}
Now let $N \to \infty$ in (\ref{weq:1}).  $(1+\varphi)G_{N+1}(\varphi) - 1$ is bounded and
\[
 \prod_{j=n}^N\Big ( 1 - \frac{1+\varphi}{\varphi +1+j\beta/2}\Big ) \to 0,
\]
so  $G_n(\varphi) = (1+\varphi)^{-1}$.
\section{Acknowledgement} Robert Griffiths visited \emph{The Institute of Statistical Mathematics} in Tachikawa, Tokyo, for three months in 2014 where this research was developed. He thanks the Institute for its support and hospitality.
\end{document}